\newcommand{\F}{\mathcal{F}}
\newcommand{\R}{\mathbb{R}}
\newcommand{\N}{\mathbb{N}}
\newcommand{\No}{\mathbb{N}_0}
\newcommand{\E}{\mathbb{E}}
\newcommand{\PP}{\mathbb{P}}
\newcommand{\Pn}{\mathcal{P}_n}
\newcommand{\dY}{\Delta Y}
\newcommand{\dX}{\Delta X}
\newcommand{\X}{{\cal X}}
\newcommand{\XX}{{\mathbb X}}
\newcommand{\XXd}{\XX_{diag}}
\newcommand{\e}{\epsilon}
\newcommand{\Rd}{{\R^d}}
\newcommand{\Rb}{{\bar{\R}}}
\newcommand{\Rdz}{{\R^d_0}}
\newcommand{\Rp}{{\R_+}}
\newcommand{\w}{{\omega}}
\newcommand{\Prawd}{\mathbb{P}}
\newcommand{\1}{\mathbf{1}}
 \newtheorem{theorem}{Theorem}[section]
\newtheorem{lemma}[theorem]{Lemma}
\newtheorem{corollary}[theorem]{Corollary}
\newtheorem{remark}[theorem]{Remark}
\definecolor{kb}{rgb}{0.2,0.7,0.1}
\definecolor{jr}{rgb}{0.1,0.3,0.6}
\numberwithin{equation}{section}
\title{L\'evy systems and moment formulas for 
mixed Poisson integrals
\footnote{2010 Mathematics Subject Classification: 60G51, 60G57. Key words and phrases: L\'evy system, Poisson-Skorochod integral.} \footnote{K.~Bogdan and {\L}.~Wojciechowski 
were supported in part by NCN grant 2012/07/B/ST1/03356. 
J.~Rosi\'nski's research was partially supported by a grant \# 281440 from the Simons Foundation.}
}
\author{Krzysztof Bogdan\footnote{Faculty of Pure and Applied Mathematics, Wroc\l{}aw University of Science and Technology, 50-370 Wroc\l{}aw, Poland, e-mail: bogdan@pwr.edu.pl} \and   Jan Rosi\'nski\footnote{Department of Mathematics, University of Tennessee, Knoxville TN 37996, USA, e-mail: rosinski@math.utk.edu} 
 \and Grzegorz Serafin\footnote{Faculty of Pure and Applied Mathematics, Wroc\l{}aw University of Science and Technology, 50-370 Wroc\l{}aw, Poland, e-mail: grzegorz.serafin@pwr.edu.pl} 
 \and {\L}ukasz Wojciechowski
}
\date{\today}
\begin{document}

\maketitle

\begin{abstract}
We propose Mecke-Palm formula for multiple integrals with respect to the Poisson random measure 
and its intensity measure performed, or mixed, in an arbitrary order.
We apply the formulas to mixed L\'evy systems of L\'evy processes and obtain moment formulas for mixed Poisson integrals. 
\end{abstract}
\section{Introduction}

The Mecke-Palm formula is an important identity in sto\-cha\-stic analysis of Poisson random measures.
In this work we propose its generalization named the (multiple) mixed-type Mecke-Palm formula. We show that the generalization
is useful and has a considerable scope of applications.

Part of our motivation comes from recent results on moments of stochastic integrals \cite{MR3021456}, \cite{MR3210036}.
These were
obtained  for $1$-processes in \cite{MR3021456}
by  using combinatorics of the binomial convolution to undo the usual compensation in stochastic integration against Poisson random measures \cite{MR1876169},  \cite{MR1943877}; and they were extended  in \cite[Theorem 3.1]{MR3210036} to ensembles of integrals of $1$-processes.

By compensation in the previous paragraph we mean integration against the difference of the random Poisson measure and its intensity, or control, measure. It is well-known that such integration fits well into the framework of $L^2$ Hilbert spaces \cite{MR2791919}. In opposition, the results of this paper mainly concern iterated integrations against the (uncompensated) Poisson random measure interlaced, or mixed, with integrations against the control measure. Such integrations preserve nonnegativity and are performed under nonnegativity or absolute integrability conditions, rather then the square-integrability conditions (for which see Lemma~\ref{martingaleLevy} below or \cite{MR2791919}).
In both settings, however, the main feature of the iterated stochastic integration is the impact of the diagonals in the corresponding Cartesian products of the state space, which cannot be ignored because the random measure has atoms. The impact is accounted for by using partitions of the set of coordinates.  We shall see below that in the setting of the uncompensated stochastic integration the description is  simpler than in
the compensated, or $L^2$, setting, for which we refer the reader to \cite[Chapter~5]{MR2791919}. In fact, the integrals against the compensated Poisson measure can be considered as (limits of) linear combinations of mixed integrals with respect to the Poisson random measure and its control measure, which explains the added complexity. 
Moreover, we may consider the results obtained in both settings as consequences of the mixed Mecke-Palm formula and the structure of the family of partitions. 
Our presentation is essentially self-contained in that it relies on the mixed Mecke-Palm formula, which we explain from the first principles. We should also remark that the integrands we consider are random, and in this respect they are more general than those in \cite{MR2791919}. 
A complete survey of results on integration with respect to random measures is beyond the scope of this paper, 
but for more information we like to refer the reader to \cite{MR1012497, MR1167198, MR874529}.

Below we first prove the mixed Mecke-Palm formula and use its along with the so-called linearization to obtain moments of stochastic integrals in more generality than known before:
we consider moments of $k$-processes with arbitrary integer $k\ge 1$, and we allow
 Poisson stochastic integrations to be interlaced, or mixed, up to arbitrary multiplicity and order, with integrations against the intensity measure of the Poisson random measure.
Our proofs are more direct as compared to \cite{MR3021456} and \cite{MR3210036}, because they easily follow
from the mixed-type Mecke-Palm formula. 

When  the random measure is given by the jumps of a L\'evy process, the mixed Mecke-Palm formula 
translates into multiple L\'evy systems of mixed type, which is our second main application. By the multiple L\'evy systems of mixed type we mean identities for expectations of functionals defined by accumulated summations indexed by the jumps of the L\'evy process and integrations against the product of the linear Lebesgue measure on the time scale and the L\'evy measure of the process in space. They generalize 
the classical (single) L\'evy system \cite{MR0343375, MR745449, MR1138461},
which is an important tool in the study of jump-type Markov processes.
The multiple variants have interesting applications
and we indicate some of them.

The structure of the paper is as follows. In Theorem~\ref{lem:multiPalmDiag} of Section~\ref{sec:iMPf} we give the mixed Mecke-Palm formula for $k$-processes.
In Theorem~\ref{main:Theorem} of Section~\ref{sec:a} we derive general moment formulas for ensembles of $k$-processes. These are illustrated by the moments formulas for $1$-processes and $2$-processes in Section~\ref{sec:mf1} and Section~\ref{sec:mf2}, respectively.
In Theorem~\ref{E} of Section~\ref{s:Ls} we present the multiple mixed L\'evy systems for L\'evy processes in $\Rd$.
In Section~\ref{sec:asLs} 
we present several applications of the L\'evy systems including applications that merge the topics and techniques from 
Section~\ref{sec:a} and Section~\ref{s:Ls}.
Some of the results are known, but even then the presentation may be of interest. 
In Section~\ref{sec:MP} we give a proof of the simple Mecke-Palm formula, to make the paper more self-contained.

{\bf Acknowledgments.} The inspiration to study L\'evy systems came from the joint work of the first named author with Rodrigo Ba\~nuelos \cite{MR2345912}, and our interest in moment formulas is due to the work of Nicolas Privault \cite{MR3021456}. 
The present paper is based in part on the PhD dissertation of the fourth named author \cite{dissert}.  
We thank Mateusz Kwa\'snicki for many discussions and suggestions, Jean Jacod and Alexandre Popier for references to literature, Anita Behme for discussions on the L\'evy integral in Section~\ref{sec:asLs}, Nicolas Privault for a discussion of our results and Aleksander Janicki and Rodrigo Ba\~nuelos for the inspirations.

\section{Mixed Mecke-Palm formulas}\label{sec:iMPf}
A direct approach to calculus of Poisson random measures is based on the configuration space: 
Given a locally compact separable metric space  $\XX$, any locally finite subset of $\XX$ is called a configuration on $\XX$.
The configuration space is defined as $\Omega=\Omega_{\XX}=\{\w \subset \XX: \w \text{ is a configuration on $\XX$} \}$  \cite{MR2531026}.
 The elements of $\Omega$ can be identified with the class of locally finite, nonnegative-integer valued measures: if $\omega=\{y_1, y_2,\ldots\}$, where $y_i\in
\XX
$ are all different, then we also write
\begin{align*}
\w = \sum_i \delta_{y_i},
\end{align*}
where $\delta_y$ is the probability measure concentrated at $y\in \XX$. According to this identification,  $\omega$ will have two meanings depending on the context: a configuration on $\XX$ or a measure on $\XX$. 
We equip $\Omega$ with a $\sigma$-algebra $\mathcal{F}$, which is the smallest sigma-algebra of subsets of $\Omega$ making the maps $\omega\mapsto \omega(A)$ measurable for each Borel set $A\subset \XX$ cf. \cite[Chapter~10]{MR1876169}. 
A jointly measurable map 
$$f: (\XX)^k\times \Omega \ni (x_1,\ldots,x_k;\w)\;\mapsto \;f(x_1,\ldots,x_k;\w) \in \Rb$$ 
is called a process or, more specifically, a $k$-process. Here $\Rb=\R\cup\{-\infty,\infty\}$, $k\in \No=\{0,1,\ldots\}$, and when $k=0$, i.e., 
$f:\;\Omega \ni \w\mapsto f(\omega)\in \Rb$,  we call $f$ a random variable. 
We also note that for every Borel function $\phi\ge 0$ on $\XX$, the map
$$\omega \mapsto \int \phi(x)\omega(dx)$$
is well-defined and measurable, hence a random variable. We say that a $k$-process $f$ depends only on $\X \subseteq \XX$, if $f(x_1,\ldots,x_k; \w) = f(x_1,\ldots,x_k ; \w \cap \X)$ for all 
$\w \in \Omega$ and $x_1,\ldots,x_k \in \XX$.

We let $\XXd^n = \{ x = (x_1, \cdots, x_n) \in \XX^n: x_i = x_j$ \mbox{for some} $i \neq j \}$ and 
$\XX_{\neq}^n = \XX^n \setminus \XXd^n$, where $\XX_{\neq}^1=\XX$.
Given a $k$-process $f$
and $n\in \N$ we define the $n$-th coefficient $f_{(n)}$ of $f$ as a function $f_{(n)}: \XX^k\times \XX_{\neq}^n \mapsto \Rb$ such that
\begin{equation*}
f_{(n)}(x_1,\ldots,x_k;y_1, \ldots, y_n)=f(x_1,\ldots,x_k;\w), \quad \text{where }\;  \w = \{y_1, \ldots, y_n\}.
\end{equation*} 
We also let $f_{(0)}(x_1,\ldots,x_k)=f(x_1,\ldots,x_k; \emptyset)$.
Thus,
coefficients $f_{(n)}$ are 
Borel functions on $\XX^k\times \XX_{\neq}^n$
invariant upon permutations of the last $n$ coordinates.
In particular, for random variables ($0$-processes) $f$ we simply have $f_{(n)}(y_1, \ldots, y_n)= f(\{y_1, \ldots, y_n\})$, where $y_1, \ldots, y_n$ are all different, and $f_{(0)}=f(\emptyset)$.
Of course, if $f$  is a $k$-process, then $\w\mapsto f(x_1,\ldots,x_k;\w)$ is a random variable for every choice of $x_1,\ldots,x_k\in \XX$, and the $n$-th coefficient of this random variable is $f_{(n)}(x_1,\ldots,x_k;y_1,\ldots,y_n)$, provided $(y_1, \ldots, y_n)\in \XX_{\neq}^n$.

Now we define a Poisson probability measure $\PP$ on $(\Omega, \mathcal{F})$ and the corresponding expectation $\E$.
Notice that $N(A, \omega) := \omega(A)$ is a random measure on $\XX$ under any probability measure on $\Omega$, but we will consider the probability $\PP$ which makes $N$ a Poisson random measure with intensity measure $\sigma(A) = \E N(A)$. Here is a construction of $\PP$.
The main analytic datum  is a non-atomic measure $\sigma$ finite on compact subsets of $\XX$. 
If $\X$ is a Borel subset of $\XX$
and $\sigma(\X)<\infty$,
then the corresponding probability, say $\PP_{\X}$, is concentrated on {\it finite} configurations $\Omega_{\X}$ on $\X$ and defined by
\begin{align}
\E_{\X} f&=\int_{\Omega_{\X}} f(\w)\PP_{\X}(d\w)\nonumber\\
&= e^{-\sigma(\X)}\sum_{n=0}^{\infty}\frac{1}{n!} \int_{\X^n}f_{(n)}(y_1, \ldots, y_n) \sigma(dy_n)\cdots \sigma(dy_1),\label{emSie}
\end{align}
cf. 
\cite[p. 196]{MR2531026}.
Here  the first term on the rightmost of (\ref{emSie}) is $e^{-\sigma(\X)}f_{(0)}$,
according to 
a
general convention.

Further, 
let Borel sets $\X_1, \X_2, \ldots \subset \XX$ be such that $\bigcup_m \X_m = \XX$, $ \X_m \cap \X_n = \emptyset$ for $ m \neq n$,  and $\sigma(\X_m)<\infty$ for every $m$. We identify
$\Omega_\XX$ with $\otimes_m \Omega_{\X_m}$ by identifying $\omega$ with $(\omega\cap \X_m)_m$.
Then $\PP$ is unambiguously defined 
as the product measure,
$$\PP =  \otimes_{m} \PP_{\X_m}.$$ 
For $\X\subset \XX$, $\PP_\X$ may be considered as a marginal distribution of $\PP$, and
for 
random variables $f_1$, $f_2$ depending only on disjoint 
$\X_1,\X_2\subset \XX$, respectively, we have
\begin{align}\label{eq:ind}
\E [f_1(\w)f_2(\w)]
=\E_{\X_1} [f_1(\w)]\ \E_{\X_2} [f_2(\w)].
\end{align}
Here the notions of independence of a function from a set of arguments, and the pro\-babilistic independence happily meet.
In what follows $\E$ and $\PP$ are always the expectation and distribution making $\omega$ a Poisson random measure with control measure $\sigma$ (in Section~\ref{s:Ls} we make additional structure assumptions on $\XX$ and $\sigma$).

In what follows we denote $\w_{1}=\omega$,  $\w_{0} = \sigma$, for $\w \in \Omega$.
For a $1$-process $f\ge 0$
and $\epsilon\in \{ 0 , 1 \}$, we have
\begin{equation}\label{Palm2} 
   \E \int_{\XX} f(x; \omega) ~\w_\epsilon(dx) =  \int_{\XX} \E f\left(x;
\omega + \epsilon \delta_{x} \right)~ \sigma(dx).
\end{equation}
Indeed, for $\epsilon=0$ the identity follows from Fubini-Tonelli, and if $\epsilon=1$, then it is the celebrated Mecke-Palm formula, see also \cite[(2.10)]{Last_Penrose}.
(For the readers's convenience a direct proof of the Mecke-Palm formula is given in Section~\ref{sec:MP}.)

We say that a $k$-process $f$ 
vanishes on the diagonals if for all $\omega \in \Omega=\Omega_\XX$ we have $f(x_1,\ldots,x_k; \, \omega)=0$ 
whenever $(x_1,\ldots,x_k)\in \XX^k_{diag}$, i.e. whenever $x_i=x_j$ for some $1\le i< j\le k$. 
This condition is restrictive only if $k\ge 2$.
We propose the following mixed Mecke-Palm formula.
\begin{lemma}\label{lem:multiPalm}
If $f\ge 0$ 
vanishes on the diagonals and $\epsilon_1, \ldots, \epsilon_k \in \{ 0,1 \}$, then
\begin{align}
\label{multiPalm}
\E\!\! \int\limits_{\XX^k}\!\! & \ f(x_1,\ldots, x_k; 
\omega) \,  \w_{\epsilon_1}(dx_1)\cdots \w_{\epsilon_k}(dx_k) 
\! \\
& =\!\!\int\limits_{\XX^k}\!\! \  \E \, f\Big(x_1,\ldots, x_k; 
\omega+ \sum_{i=1}^k \epsilon_i \delta_{x_i}\Big) \,\sigma(dx_1)\cdots\sigma( dx_k).
\nonumber
\end{align}
\end{lemma}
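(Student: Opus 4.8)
The plan is to argue by induction on $k$, using the single mixed formula (\ref{Palm2}) to strip off one integration at a time. The base case $k=1$ is exactly (\ref{Palm2}), which already handles both $\epsilon_1=0$ (Fubini--Tonelli) and $\epsilon_1=1$ (the Mecke--Palm formula) in one stroke. For the inductive step I assume (\ref{multiPalm}) for $(k-1)$-processes and take $f\ge 0$ a $k$-process vanishing on the diagonals. I would freeze the first integration by setting, for $x_1\in\XX$ and $\w\in\Omega$,
$$
g(x_1;\w)=\int_{\XX^{k-1}} f(x_1,\ldots,x_k;\w)\,\w_{\epsilon_2}(dx_2)\cdots\w_{\epsilon_k}(dx_k),
$$
which is a nonnegative $1$-process by Tonelli and the joint measurability of $f$. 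Applying (\ref{Palm2}) to $g$ gives
$$
\E\int_{\XX^k} f\,\w_{\epsilon_1}(dx_1)\cdots\w_{\epsilon_k}(dx_k)=\E\int_{\XX} g(x_1;\w)\,\w_{\epsilon_1}(dx_1)=\int_{\XX}\E\,g(x_1;\w+\epsilon_1\delta_{x_1})\,\sigma(dx_1).
$$

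The crucial step is to simplify $g(x_1;\w+\epsilon_1\delta_{x_1})$. Here the shift $\w\mapsto\w+\epsilon_1\delta_{x_1}$ affects the integrand both through the configuration argument of $f$ and, for each index $i$ with $\epsilon_i=1$, through the integrating measure, which becomes $\w+\epsilon_1\delta_{x_1}$ (the measures with $\epsilon_i=0$ remain equal to $\sigma$). Expanding each such measure, every term that picks up the extra atom $\delta_{x_1}$ in a coordinate $x_i$ forces $x_i=x_1$, i.e. lands on $\XX^k_{diag}$, where $f$ vanishes; hence all the $\delta_{x_1}$ contributions drop out and
$$
g(x_1;\w+\epsilon_1\delta_{x_1})=\int_{\XX^{k-1}} f(x_1,\ldots,x_k;\w+\epsilon_1\delta_{x_1})\,\w_{\epsilon_2}(dx_2)\cdots\w_{\epsilon_k}(dx_k).
$$

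For fixed $x_1$ and $\epsilon_1$, the map $h(x_2,\ldots,x_k;\w):=f(x_1,\ldots,x_k;\w+\epsilon_1\delta_{x_1})$ is a nonnegative $(k-1)$-process that vanishes on the diagonals of its $k-1$ variables, since $f$ vanishes on all diagonals. Applying the inductive hypothesis to $h$ with parameters $\epsilon_2,\ldots,\epsilon_k$ rewrites the inner expectation as the integral against $\sigma(dx_2)\cdots\sigma(dx_k)$ of $\E\,h(x_2,\ldots,x_k;\w+\sum_{i=2}^k\epsilon_i\delta_{x_i})=\E\,f(x_1,\ldots,x_k;\w+\sum_{i=1}^k\epsilon_i\delta_{x_i})$; reinstating the outer $\sigma(dx_1)$ and merging the iterated integrals by Tonelli yields (\ref{multiPalm}). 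I expect the main obstacle to be precisely the diagonal simplification: one must verify carefully that replacing $\w$ by $\w+\epsilon_1\delta_{x_1}$ in the measures $\w_{\epsilon_i}$ introduces only the single atom $\delta_{x_1}$, and that the hypothesis ``$f$ vanishes on the diagonals'' is exactly what annihilates these spurious atomic contributions. This is what prevents lower-order correction terms from appearing and keeps the right-hand side in the clean product form.
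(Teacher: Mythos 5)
Your proposal is correct and follows essentially the same route as the paper's own proof: induction on $k$, applying the single mixed formula \eqref{Palm2} to one coordinate, with the diagonal-vanishing hypothesis used exactly as in the paper to discard the spurious $\delta_{x_1}$ atoms from the integrating measures (the paper's equation \eqref{v}), and then the inductive hypothesis on the remaining $k-1$ variables. The only difference is cosmetic: you peel off the first coordinate while the paper peels off the last, which is immaterial by Tonelli for nonnegative integrands.
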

 
\begin{proof}
Case $k=0$ is trivial: $\E f(\w)=\E f(\w)$.
Case $k=1$ is precisely \eqref{Palm2}. 
For $k > 1$ we define
$$
g(x;\omega; \epsilon_1, \ldots, \epsilon_{k-1}) = \int_{\XX^{k-1}} f(x_1,\ldots, x_{k-1}, x; \omega) \,  \w_{\epsilon_1}(dx_1)\cdots \w_{\epsilon_{k-1}}(dx_{k-1}).
$$
Since 
$f(x_1,\ldots, x_{k
-1},x;\w)$ vanishes on $\XX^k_{diag}$, we get
\begin{align}\label{v}
&g(x; \omega+ \delta_x; \epsilon_1, \ldots, \epsilon_{k-1}) \nonumber \\ 
&= \int_{\XX^{k-1}} f(x_1,\ldots, x_{k-1}, x; \omega+ \delta_x) \,  (\w_{\epsilon_1}+ \epsilon_1 \delta_x)(dx_1)\cdots (\w_{\epsilon_{k-1}}+ \epsilon_{k-1}\delta_x)(dx_{k-1}) \nonumber \\
& = \int_{\XX^{k-1}} f(x_1,\ldots, x_{k-1}, x; \omega+ \delta_x) \,  \w_{\epsilon_1}(dx_1)\cdots \w_{\epsilon_{k-1}}(dx_{k-1}).
\end{align}
By \eqref{Palm2}, \eqref{v} and induction we obtain
\begin{align*}
 &  \E\int\limits_{\XX^k}\!\! \ f(x_1,\ldots, x_k; \omega) \,  \w_{\epsilon_1}(dx_1)\cdots \w_{\epsilon_k}(dx_k)  \\
 & = \E \int_{\XX} g(x_{k}; \omega; \epsilon_1, \ldots, \epsilon_{k-1}) ~ \w_{\epsilon_k}(dx_k) =  \int_{\XX} \E g(x_{k}; \omega + \epsilon_k  \delta_{x_k}; \epsilon_1, \ldots, \epsilon_{k-1}) ~\sigma(dx_{k})  \\
 & =  \int\limits_{\XX} \E \int_{\XX^{k-1}} f(x_1,\ldots, x_{k-1}, x_k; \omega + \epsilon_k  \delta_{x_k}) \,  \w_{\epsilon_1}(dx_1)\cdots \w_{\epsilon_{k-1}}(dx_{k-1})
\sigma( dx_{k}) \\ 
& \!=\!\!\int\limits_{\XX^k}\!\! \  \E \, f\Big(x_1,\ldots, x_k; \omega + \sum_{i=1}^k \epsilon_i \delta_{x_i}\Big) \,\sigma(dx_1)\cdots\sigma( dx_k),
\end{align*} 
which proves \eqref{multiPalm}.
\end{proof}

\begin{remark}
{\rm 
Lemma~\ref{lem:multiPalm} extends to signed processes $f$ satisfying
$$\int\limits_{\XX^k}\!\! \  \E \, \Big|f\Big(x_1,\ldots, x_k; 
\omega+ \sum_{i=1}^k \epsilon_i \delta_{x_i}\Big)\Big| \,\sigma(dx_1)\cdots\sigma( dx_k)<\infty,$$ because of the decomposition $f = f_+ - f_-$, where $f_{+} = \max (f,0)$ and $f_{-} = \max (-f,0)$. In what follows we leave such extensions to the reader.
}
\end{remark}

\begin{remark}\label{rem:ana}
   {\rm 
The assumption in Lemma \ref{lem:multiPalm}  that $f$ should vanish on the diagonals is essential.  Indeed,  take $k=2$ and (deterministic) $f(x_1,x_2; \w) = \mathbf{1}_{\{ x_1=x_2\}}$ for $(x_1,x_2) \in \XX^2$. 
Considering the atoms of $\w$ we have 
\begin{align*}
\int\limits_{\XX^2}  f(x_1,x_2; \w) ~\omega(dx_1)\omega(dx_2) = 
\sum_{x_1\in \w}\sum_{x_2\in \w} \1_{x_1=x_2}=\w(\XX),  
\end{align*}
hence
$$\E\int\limits_{\XX^2}  f(x_1,x_2; \w) ~\omega(dx_1)\omega(dx_2) = 
\sigma(\XX).$$  
On the other hand  $\sigma$ is non-atomic, therefore
$$\int\limits_{\XX^2} \E f(x_1,x_2; \w) ~\sigma(dx_1)\sigma(dx_2) =\int\limits_{\XX^2} f(x_1,x_2; \w)\sigma(dx_1)\sigma(dx_2)=0.$$
   }
\end{remark}
Motivated by the above example we shall give a version of the multiple Mecke-Palm formula for processes which do not necessarily vanish on the diagonals. This calls for a notation that can handle {\it partitions}:
For integers $k,n\geq 1$ we consider a family of pairwise disjoint nonempty sets (blocks) of integers $P = \{ P_1, \ldots, P_k \}$,  such that 
$\bigcup_{i=1}^k P_i = \{1, \ldots, n \}$.
Thus, $P$ is a partition of $\{1, \ldots, n \}$.
We denote by $\Pn$ the set of all such partitions. We will use partitions to describe effects of interlaced Poisson integrations on the diagonals of $\XX^n$, in a manner which resembles the approach to multiple It\^o integrals and compensated Poisson integrals in \cite{MR2791919}.
For $P\in \Pn$ we let
$$
\XX_{P}^n = \bigg\{(x_1, \ldots, x_n) \in \XX^n: x_i = x_j~ 
\mbox{
iff
}~i,j\in P_s~\mbox{for some}~ s\in \{1, \ldots, k\}   \bigg\}.
$$
For $P = \{ P_1, \ldots, P_k \}\in \Pn$ and $y\in \XX_{\neq}^k$, we define $y^{[P]} = (y_1^{[P]}, \ldots, y_n^{[P]})$ by letting $y_i^{[P]} = y_j$ if $i\in P_j$.
We have, as in Remark~\ref{rem:ana},
\begin{align}\label{eq:ond}
&\int_{\XX^n} f(x_1,\ldots,x_n;\w)\w(dx_1)\cdots\w(dx_n)\nonumber\\
&=
\sum_{P=\{P_1,\ldots,P_k\}\in \Pn}\int_{\XX_{P}^n} f(x;\w)\w(dx_1)\cdots\w(dx_n)\nonumber\\
&=
\sum_{P=\{P_1,\ldots,P_k\}\in \Pn}\int_{\XX_{\neq}^k} f(y^{[P]};\w)\w(dy_1)\cdots \w(dy_k). 
\end{align}
As in Remark~\ref{rem:ana} we also note that for $n>1$ and all $\w$,
\begin{align}\label{eq:cpgp}
\int_{\XX^n}\1_{x_1=x_2=\ldots=x_n}\sigma(dx_1)\w(dx_2)\cdots\w(dx_n)=0,
\end{align}
because the first marginal of the product measure is non-atomic.  Therefore in view of generalizing \eqref{eq:ond} to interlaced integrations against $\w_1$ and $\w_0$, we propose the following notation. 
For $\e_1,\ldots,\e_n\in \{0,1\}$ we let $\e = (\e_1,\ldots,\e_n)$ and consider the family $\Pn^{\e}$ of all partitions $P=\{P_1,\ldots,P_k\}$ of $\{1,\ldots,n\}$ such that for every block $P_i \in P$ with $|P_i|>1$ we have $\e_j = 1$ for all $j\in P_i$. 
For $P\in \Pn^{\e}$ we let $\e^{[P]}=(\e_{1}^{[P]}, \ldots, \e_k^{[P]})$, where $\e_1^{[P]}= \e_{i_1}, \ldots, \e_k^{[P]}= \e_{i_k}$ and $i_1 \in P_1, \ldots,i_k \in P_k$. For $y = (y_1,\ldots,y_k) \in \XX^k$ we then let $y_{\e^{[P]}} = \{y_i:~ \e_i^{[P]} = 1\}$. In the following extension of Lemma \ref{lem:multiPalm} we write $x$ for  $(x_1,\ldots,x_n) \in \XX^n$ and $\sigma^k(dy)=\sigma(dy_1)\cdots\sigma(dy_k)$. The identity \eqref{multiPalmGen} below gives an algorithm to calculate expectations of Poisson integrals mixed with integrations against the control measure.
\begin{theorem}\label{lem:multiPalmDiag}
Let $\E$ be the expectation making configurations $\omega$ on $\XX$ a Poisson random measure with control measure $\sigma$.
For every $n$-process $f\ge 0$ 
and $\epsilon_1, \ldots, \epsilon_n \in \{ 0,1 \}$ we have
\begin{align}
\label{multiPalmGen}
&\E \int\limits_{\XX^n}  \ f(x; 
\omega) \,  \w_{\epsilon_1}(dx_1)\cdots \w_{\epsilon_n}(dx_n) 
\\
& =\sum_{P=\{P_1,\ldots,P_k\}\in\Pn^{\e}}~\int\limits_{\XX_{\neq}^k}\  \E \, f\left(y^{[P]}; 
\omega\cup y_{\e^{[P]}}\right) \,
\sigma^k(dy
).
\nonumber
\end{align}
\end{theorem}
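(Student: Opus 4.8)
The plan is to reduce \eqref{multiPalmGen} to Lemma~\ref{lem:multiPalm} by first performing, for each fixed $\w$, a pathwise decomposition of the multiple integral along the diagonals, as in \eqref{eq:ond}, and then applying the mixed Mecke-Palm formula stratum by stratum. \emph{Step 1.} Splitting $\XX^n$ into the strata $\XX_P^n$, $P\in\Pn$, on which the equality pattern of the coordinates is prescribed by $P$, gives
\begin{align*}
&\int_{\XX^n} f(x;\w)\,\w_{\e_1}(dx_1)\cdots\w_{\e_n}(dx_n)\\
&\qquad=\sum_{P\in\Pn}\int_{\XX_P^n} f(x;\w)\,\w_{\e_1}(dx_1)\cdots\w_{\e_n}(dx_n).
\end{align*}
If a block $P_j$ of $P$ has $|P_j|>1$ and contains an index $i$ with $\e_i=0$, the corresponding contribution vanishes for every $\w$: integrating the constrained coordinates of that block, the $\sigma$-integration picks up $\sigma(\{\cdot\})=0$ by non-atomicity, exactly as in \eqref{eq:cpgp}; hence only $P\in\Pn^{\e}$ survive. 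On each surviving stratum I reparametrise $x=y^{[P]}$ with $y\in\XX_{\neq}^k$, and, using that configurations are simple ($\w(\{z\})\in\{0,1\}$ a.s.), each multi-element (necessarily $\omega$-type) block collapses to one $\omega$-integration while a singleton $\{i\}$ keeps the factor $\w_{\e_i}$. This produces the interlaced form of \eqref{eq:ond},
\begin{align*}
&\int_{\XX^n} f(x;\w)\,\w_{\e_1}(dx_1)\cdots\w_{\e_n}(dx_n)\\
&\qquad=\sum_{P=\{P_1,\dots,P_k\}\in\Pn^{\e}}\int_{\XX_{\neq}^k} f(y^{[P]};\w)\,\w_{\e_1^{[P]}}(dy_1)\cdots\w_{\e_k^{[P]}}(dy_k).
\end{align*}

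\emph{Step 2.} I would take $\E$ of both sides and handle each summand separately. For fixed $P\in\Pn^{\e}$ the $k$-process $h_P(y;\w)=f(y^{[P]};\w)\,\1_{\XX_{\neq}^k}(y)$ vanishes on the diagonals of $\XX^k$, so Lemma~\ref{lem:multiPalm}, applied with the exponent vector $\e^{[P]}=(\e_1^{[P]},\dots,\e_k^{[P]})$, yields
\begin{align*}
&\E\int_{\XX_{\neq}^k} f(y^{[P]};\w)\,\w_{\e_1^{[P]}}(dy_1)\cdots\w_{\e_k^{[P]}}(dy_k)\\
&\qquad=\int_{\XX_{\neq}^k}\E\, f\Big(y^{[P]};\w+\textstyle\sum_{j=1}^k \e_j^{[P]}\delta_{y_j}\Big)\,\sigma^k(dy),
\end{align*}
the diagonal indicator on the right restricting the $\sigma^k$-integration to $\XX_{\neq}^k$.

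\emph{Step 3.} For $\sigma^k$-a.e.\ $y\in\XX_{\neq}^k$ the coordinates $y_1,\dots,y_k$ are distinct, and for such $y$ the set $\{y_1,\dots,y_k\}$ meets the atoms of $\w$ with $\PP$-probability zero, again by non-atomicity of $\sigma$; hence $\E$-a.s.\ $\w+\sum_{j}\e_j^{[P]}\delta_{y_j}=\w\cup y_{\e^{[P]}}$. Substituting this and summing over $P\in\Pn^{\e}$ gives exactly \eqref{multiPalmGen}. I expect the combinatorial bookkeeping of Step~1 to be the main obstacle: one must verify simultaneously that multi-element blocks carrying a $\sigma$-factor drop out and that the surviving $\omega$-blocks collapse correctly under $x=y^{[P]}$, while tracking the constraint that distinct blocks take distinct values. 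Once the interlaced form of \eqref{eq:ond} is established, Steps~2 and~3 are routine applications of Lemma~\ref{lem:multiPalm} and of the non-atomicity of $\sigma$.
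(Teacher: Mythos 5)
Your proposal is correct and follows essentially the same route as the paper's proof: stratify $\XX^n$ by the partition diagonals $\XX_P^n$, discard the partitions outside $\Pn^{\e}$ via the non-atomicity argument \eqref{eq:cpgp}, reparametrize each surviving stratum as an integral over $\XX_{\neq}^k$ against $\w_{\e^{[P]}}$, and apply Lemma~\ref{lem:multiPalm}. Your Step~3, identifying $\w+\sum_j\e_j^{[P]}\delta_{y_j}$ with $\w\cup y_{\e^{[P]}}$ almost surely via non-atomicity, makes explicit a notational point the paper leaves implicit, but it is not a different method.
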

\begin{proof}
By similar reasons as in \eqref{eq:ond}, and by Lemma~\ref{lem:multiPalm}, 
\begin{align}
&\E \int\limits_{\XX^n}  \ f(x; 
\omega) \,  \w_{\epsilon_1}(dx_1)\cdots \w_{\epsilon_n}(dx_n)  \nonumber
\\
& = \sum_{P=\{P_1,\ldots,P_k\}\in\Pn}\E\int\limits_{\XX_{P}^{n}}  f\left(x; 
\omega\right) \,\w_{\epsilon_1}(dx_1)\cdots \w_{\epsilon_n}(dx_n)\nonumber\\
& = \sum_{P=\{P_1,\ldots,P_k\}\in\Pn^{\e}} \E \int\limits_{\XX_{P}^n} f\left(x; 
\omega\right) \,\w_{\epsilon_1}(dx_1)\cdots \w_{\epsilon_n}(dx_n)\label{eq:nav}\\
&=\sum_{P=\{P_1,\ldots,P_k\}\in\Pn^{\e}}\E  \int\limits_{\XX_{\neq}^k} f\left(y^{[P]}; 
\omega\right) \,\w_{\epsilon^{[P]}}(dy) \nonumber\\
&=\sum_{P=\{P_1,\ldots,P_k\}\in\Pn^{\e}}\; \int\limits_{\XX_{\neq}^k} \E  f\left(y^{[P]};  
\omega \cup y_{\e^{[P]}}\right) \,\sigma^k(dy).
\nonumber
\end{align}
In \eqref{eq:nav} 
we use \eqref{eq:cpgp}
to eliminate $P\notin \mathcal{P}_n^\epsilon$.

\end{proof}

\section{Moments}\label{sec:a}
In this section we give applications of the mixed Mecke-Palm formula to expectations of products of stochastic integrals with respect to the Poisson random measure $\omega$ with control measure $\sigma$ on $\XX$ and probability $\PP$ and expectation $\E$. 

\subsection{General moment formulas}\label{sec:gmf}
Theorem~\ref{main:Theorem} below generalizes moment formulas of \cite{MR3021456}, \cite{MR3210036}. As we see in the proof, the result is equivalent to the mixed Mecke-Palm formula \eqref{multiPalmGen}
and is obtained after a simple {\it linearization} procedure.
Let $S$ be a finite set and $\XX^S=\{x: S\to \XX \}$. For $x\in \XX^S$ and $s\in S$ we write $x_s=x(s)$. We consider $\mathcal{P}(S)$, the class of all the partitions $P=\{P_1,\ldots,P_k\}$ of $S$.
Here (blocks) $P_1,\ldots, P_k$ are disjoint, and $\bigcup_{\alpha=1}^k P_{\alpha}=S$. 
Let $P \in \mathcal{P}(S)$ and consider the $P$-diagonal:
$$\XX_P^S=\{x\in \XX^S: x_s=x_t \mbox{ iff
there is $i\in \{1,\ldots,k\}$ such that } s,t\in P_{\alpha}\}.$$
For $\epsilon: S\to \{0,1\}$ we denote $\omega_\epsilon(dx)=\otimes_{s\in S}\omega_{\epsilon_s}(dx_{s})$. We note that $\omega_\epsilon$ vanishes on $X_P^S$ if
there is block $P_{\alpha}\in P$ with cardinality $|P_{\alpha}|>1$ 
and such that $\epsilon=0$ at some point of $P_\alpha$. This is so because the product measure has a non-atomic marginal.
The set of the remaining partitions will be denoted $\mathcal{P}^\epsilon(S)$. In particular, if $P\in \mathcal{P}^\epsilon(S)$ then $\epsilon$ is constant on every block of $P$, and we may define $\epsilon_{\alpha}^P:=\epsilon_s$ if $s\in P_{\alpha}$, $\alpha=1,\ldots,k$.
We denote $\e^P = (\e_1^P, \ldots, \e_k^P)$.
For $y=(y_1,\ldots,y_k)\in \XX^k$ we let $y^P_s=y_{\alpha}$ if $s\in P_{\alpha}$. Thus, $\epsilon^P\in \{0,1\}^k$ and $y^P\in \XX^S$.
For measurable $f: \XX^S\to \Rp$ we have
$$
\int_{\XX^P} f(x)\omega_\epsilon(dx)=\int_{\XX^k}f(y^P)\omega_{\epsilon_1}(dy_1)\cdots \omega_{\epsilon_k}(dy_k),
$$
which follows because $\omega$ is a sum of Dirac measures supported at different points.

Let $ l \in \N_+$ and $r_1, n_1, \ldots, r_l, n_l \ge 1$.
We define $$S=\{(\alpha, \beta, \gamma): 1\le \alpha\le l, 1\le \beta\le r_{\alpha}, 1\le \gamma\le n_\alpha\}.$$ 
If $1\le \alpha \le l$ and $1 \le \gamma \le n_{\alpha}$, then we let 
$$S_{\alpha,\gamma} = \{(\alpha, \beta, \gamma) \in S: 1\le \beta \le r_{\alpha}\}.$$
For $z\in \XX^S$ we write, as usual, $z_{S_{\alpha,\gamma}}$ for the restriction of $z$ to $S_{\alpha,\gamma}$. If $P=\{P_1,\ldots,P_k\}\in \mathcal{P}(S)$ and $y\in \XX^k$, then $y^P_{S_{\alpha,\gamma}}$ denotes $(y^P)_{S_{\alpha,\gamma}}$. In particular, $y^P_{S_{\alpha,\gamma}}\in \XX^{S_{\alpha,\gamma}}$.
 \begin{theorem}\label{main:Theorem}
 Let $\E$ be the expectation making $\omega$ a Poisson random measure on $\XX$ with control measure $\sigma$.
Let  $f_0,f_1, \ldots, f_l \ge 0$ be $0, r_1, \ldots, r_l$-processes, respectively. 
Let $\e_{(1)} \in \{0,1\}^{r_1}, \ldots, \e_{(l)} \in \{0,1\}^{r_l}$. For  $s=(\alpha, \beta, \gamma)\in S$
we define $\epsilon_s=\epsilon_{(\alpha)}(\beta)$.
Then,
  \begin{align}\label{moment_integr}
 & \E  \Bigg[ f_0(\w) \left( \int_{\XX^{
r_1}} f_1(y; \w)  \w_{\e_{(1)}}(dy) \right)^{n_1} \ldots
   \left(  \int_{\XX^{
r_l}}  f_l(y ; \w)  \w_{\e_{(l)}}(dy) \right)^{n_l} \Bigg] \\ 
  &=  \!\!\!\!\!\!\!\!\sum\limits_{P = \{P_1, \ldots, P_k \} \in \mathcal{P}^\epsilon(S) } \!\! \E \left[  \int\limits_{\XX^k_{\neq}} f_0(\w\cup \bigcup\limits_{\e_s=1} \{y_s^P\})
\prod_{\alpha=1}^l \prod_{\gamma=1}^{n_{\alpha}} f_{\alpha}(y_{S_{\alpha,\gamma}}^P; \w\cup \bigcup\limits_{\e_s=1} \{y_s^P\}) 
\sigma^k(dy) \right].\nonumber
  \end{align}  
\end{theorem}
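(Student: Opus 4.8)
The plan is to reduce the statement to a single application of the multiple mixed-type Mecke-Palm formula \eqref{multiPalmGen} by the linearization announced in the text. First I would expand each power as an $n_\alpha$-fold product, writing the $\gamma$-th factor of the $\alpha$-th bracket as an integral in a fresh variable ranging over $\XX^{S_{\alpha,\gamma}}$. Since every $f_\alpha\ge 0$ and all integrations are against the nonnegative measures $\w_{\epsilon_s}$, Tonelli lets me pull all the integrals and the expectation together into a single multiple integral over $\XX^S$. Concretely, introducing the $|S|$-process
\[
F(z;\w)=f_0(\w)\prod_{\alpha=1}^l\prod_{\gamma=1}^{n_\alpha} f_\alpha(z_{S_{\alpha,\gamma}};\w),\qquad z\in\XX^S,
\]
and recalling $\w_\epsilon(dz)=\otimes_{s\in S}\w_{\epsilon_s}(dz_s)$ with $\epsilon_s=\epsilon_{(\alpha)}(\beta)$ for $s=(\alpha,\beta,\gamma)$, the left-hand side of \eqref{moment_integr} is exactly $\E\int_{\XX^S}F(z;\w)\,\w_\epsilon(dz)$.

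The core step is then to apply Theorem~\ref{lem:multiPalmDiag} to $F$, with the finite index set $S$ playing the role of $\{1,\ldots,n\}$. This first requires transporting the notation of Section~\ref{sec:iMPf} to the $S$-indexed setting: the family $\Pn^\epsilon$ becomes $\mathcal{P}^\epsilon(S)$, while $y^{[P]}$ and $y_{\epsilon^{[P]}}$ become $y^P$ and $y_{\epsilon^P}=\bigcup_{\epsilon_s=1}\{y_s^P\}$. With this dictionary, \eqref{multiPalmGen} gives
\[
\E\int_{\XX^S}F(z;\w)\,\w_\epsilon(dz)=\sum_{P=\{P_1,\dots,P_k\}\in\mathcal{P}^\epsilon(S)}\int_{\XX_{\neq}^k}\E\,F\big(y^P;\,\w\cup y_{\epsilon^P}\big)\,\sigma^k(dy).
\]
Finally I would substitute the definition of $F$ back in. Since $(y^P)_{S_{\alpha,\gamma}}=y^P_{S_{\alpha,\gamma}}$ by definition, and since $\epsilon$ is constant on each block of $P\in\mathcal{P}^\epsilon(S)$ so that $\w\cup y_{\epsilon^P}=\w\cup\bigcup_{\epsilon_s=1}\{y_s^P\}$, the integrand factors as
\[
f_0\Big(\w\cup\bigcup_{\epsilon_s=1}\{y_s^P\}\Big)\prod_{\alpha=1}^l\prod_{\gamma=1}^{n_\alpha}f_\alpha\Big(y^P_{S_{\alpha,\gamma}};\,\w\cup\bigcup_{\epsilon_s=1}\{y_s^P\}\Big),
\]
which is precisely the summand on the right-hand side of \eqref{moment_integr}.

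The mathematical content is thus a direct corollary of \eqref{multiPalmGen}; the only genuine work, and the step most likely to conceal an error, is the bookkeeping in the dictionary between the linear index set $\{1,\dots,n\}$ of Theorem~\ref{lem:multiPalmDiag} and the structured set $S$. In particular I would check that the admissible partition families $\Pn^\epsilon$ and $\mathcal{P}^\epsilon(S)$ correspond under the relabeling, and that the fill-in operation $y\mapsto y^P$ together with the augmentation $\w\mapsto\w\cup y_{\epsilon^P}$ agrees on both sides. The partition correspondence I would confirm by the same non-atomicity argument as in \eqref{eq:cpgp}: a block of size $>1$ carrying an $\epsilon_s=0$ coordinate forces a $\sigma$-marginal against a repeated variable, which vanishes, so such partitions drop out on both sides, leaving exactly the sum over $\mathcal{P}^\epsilon(S)$.
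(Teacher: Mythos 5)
Your proposal is correct and takes essentially the same route as the paper: both proofs linearize the powers into a single integral of the product process over $\XX^S$ and then invoke the mixed Mecke--Palm formula \eqref{multiPalmGen} of Theorem~\ref{lem:multiPalmDiag}, with the dictionary between $\{1,\ldots,n\}$ and $S$ (including the reduction to $\mathcal{P}^\epsilon(S)$ by non-atomicity) handled exactly as you describe. The only cosmetic difference is that the paper writes out the intermediate decomposition over all partitions in $\mathcal{P}(S)$ explicitly instead of citing Theorem~\ref{lem:multiPalmDiag} as a black box.
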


\begin{proof}The first transformation in the calculation below we call linearization, and the last one follows from Theorem~\ref{lem:multiPalmDiag}:
 \begin{align} 
 & \E  \Bigg[ f_0(\w)\left( \int\limits_{\XX^{r_1}} f_1(y; \w)  \w_{\e_{(1)}}(dy) \right)^{n_1} \ldots
   \left(  \int\limits_{\XX^{r_l}}  f_l(y ; \w)  \w_{\e_{(l)}}(dy) \right)^{n_l} \Bigg] \nonumber\\
 & = \E \bigg[ f_0(\w) \int_{\XX^{r_1}} \ldots \int_{\XX^{r_1}} \prod_{\gamma=1}^{n_1}  f_1(y_{S_{1,\gamma}} ; \w) \w_{\e_{(1)}}(dy_{S_{1,1}})\ldots \w_{\e_{(1)}}(dy_{S_{1,n_1}}) \times \nonumber\\
 & \;\;\;\;\; \cdots  \times   \int_{\XX^{r_l}} \ldots \int_{\XX^{r_l}} \prod_{\gamma=1}^{n_l}  f_l(y_{S_{l,\gamma}} ; \w) \w_{\e_{(l)}}(dy_{S_{l,1}})\ldots \w_{\e_{(l)}}(dy_{S_{l,n_l}})   \bigg] \nonumber\\ 
 & = \E \bigg[ \int_{\XX^S} f_0(\w) \prod_{\alpha = 1}^l \prod_{\gamma=1}^{n_{\alpha}} f_{\alpha} (y_{S_{\alpha, \gamma}}; \w) \w_{\e_{(1)}} (dy_{S_{1,1}}) \ldots \w_{\e_{(1)}} (dy_{S_{1,n_1}}) \ldots\nonumber\\
 & \qquad \qquad \qquad \qquad\qquad \qquad \qquad\;  \ldots  \w_{\e_{(l)}} (dy_{S_{l,1}}) \ldots \w_{\e_{(l)}} (dy_{S_{l,n_l}}) \bigg]\nonumber\\
  &= \E \left[ \sum\limits_{P = \{P_1, \ldots, P_k \} \in \mathcal{P}(S) \;} \int\limits_{\XX^k_{\neq}} 
f_0(\w) \prod_{\alpha=1}^l \prod_{\gamma=1}^{n_{\alpha}} f_{\alpha}(y_{S_{\alpha,\gamma}}^P; \w) \w_{\e^P}(dy)
 \right]\nonumber\\
  &= \E 
\!\!\!\!\!\!\!  \sum\limits_{P = \{P_1, \ldots, P_k \} \in \mathcal{P}^{\epsilon}(S)\; } \int\limits_{\XX^k_{\neq}} f_0(\w\cup \bigcup\limits_{\e_s=1} \{y_s^P\})
\prod_{\alpha=1}^l \prod_{\gamma=1}^{n_{\alpha}} f_{\alpha}(y_{S_{\alpha,\gamma}}^P; \w\cup \bigcup\limits_{\e_s=1} \{y_s^P\}) \sigma^k(dy).
\nonumber
  \end{align} 
\end{proof}
In concrete computations one may either use Theorem~\ref{main:Theorem}, along with its somewhat heavy notation, or just follow its proof, i.e. use linearization and the mixed Mecke-Palm formula. For instance in Lemma~\ref{lem:22} below it is simpler to use the latter approach.

\subsection{Moment formulas for stochastic integrals of $1$-processes}\label{sec:mf1}
We first specialize to $1$-processes.
Let $k, l,n_1, \ldots, n_l \in \N=\{1,2,\ldots\}$ and $n=n_1 + \ldots + n_l$. For $j = 1, \ldots, l$ and $P = \{ P_1, \ldots, P_k \}\in \Pn$ we denote
$$
P_{i,j} = \{d\in P_i: \sum_{0<m<j} n_m < d \leq  \sum_{0<m \leq j} n_m \}.
$$
Let $|P_{i,j}|$ be the number of elements of $P_{i,j}$. 
\begin{corollary}\label{1proc:result}
For a random variable $f_0 \ge 0$ and $1$-processes $f_1, \ldots, f_l\ge 0$,
\begin{eqnarray}\label{momentForm}
&&\E\bigg[ f_0(\w) \bigg( \int_{\XX} f_1(x;\w) \w(dx) \bigg)^{n_1} \cdots \bigg( \int_{\XX} f_l(x;\w) \w(dx) \bigg)^{n_l} \bigg] \\ 
&=& \sum_{P \in \Pn} \E\int_{\XX^k} f_0(\w+\sum_{i=1}^k \delta_{y_i}) f_1^{|P_{1,1}|}(y_1;\w+\sum_{i=1}^k \delta_{y_i}) \cdots f_l^{|P_{1,l}|}(y_1;\w+\sum_{i=1}^k \delta_{y_i}) \times\nonumber\\
&&\times f_1^{|P_{k,1}|}(y_k;\w+\sum_{i=1}^k \delta_{y_i}) \cdots f_l^{|P_{k,l}|}(y_k;\w+\sum_{i=1}^k \delta_{y_i}) \sigma(dy_1) \ldots \sigma(dy_k).\nonumber
\end{eqnarray}
\end{corollary}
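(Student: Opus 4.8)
The plan is to read off the identity as a specialization of Theorem~\ref{main:Theorem}. Since the corollary involves only $1$-processes and integrations against the Poisson measure $\w=\w_1$, I would apply Theorem~\ref{main:Theorem} with $r_1=\cdots=r_l=1$ and with every $\e_{(\alpha)}$ equal to the single-entry vector $1\in\{0,1\}^{r_\alpha}$, so that $\e_s=1$ for all $s\in S$. Two simplifications follow at once. First, because $\e\equiv 1$ the defining restriction of $\mathcal{P}^{\e}(S)$ is vacuous, so $\mathcal{P}^{\e}(S)=\mathcal{P}(S)$. Second, $\bigcup_{\e_s=1}\{y_s^P\}=\{y_1,\ldots,y_k\}$, and since the $y_i$ are pairwise distinct on $\XX^k_{\neq}$ the shifted configuration reads $\w\cup\{y_1,\ldots,y_k\}=\w+\sum_{i=1}^k\delta_{y_i}$ as a measure; this turns the shift of \eqref{moment_integr} into the additive shift of \eqref{momentForm}.

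Next I would fix the identification of the two index sets. With $r_\alpha=1$ the middle coordinate $\beta$ is redundant, so $S=\{(\alpha,1,\gamma):1\le\alpha\le l,\ 1\le\gamma\le n_\alpha\}$ has cardinality $n=n_1+\cdots+n_l$; ordering its elements lexicographically in $(\alpha,\gamma)$ gives a bijection $S\cong\{1,\ldots,n\}$ under which the indices with $\alpha=j$ correspond exactly to the $j$-th consecutive block $\{\sum_{m<j}n_m+1,\ldots,\sum_{m\le j}n_m\}$. Through this bijection $\mathcal{P}(S)$ is identified with $\Pn$, matching the summation in \eqref{momentForm}. I would also note that each $S_{\alpha,\gamma}$ is now a singleton, so $y^P_{S_{\alpha,\gamma}}$ is the single point $y_i$, where $P_i$ is the block of $P$ containing $(\alpha,1,\gamma)$; thus $f_\alpha(y^P_{S_{\alpha,\gamma}};\cdot)$ is a plain evaluation of the $1$-process $f_\alpha$.

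The one genuinely combinatorial step is to rewrite the double product. The integrand in \eqref{moment_integr} carries $\prod_{\alpha=1}^l\prod_{\gamma=1}^{n_\alpha}f_\alpha(y^P_{S_{\alpha,\gamma}};\cdot)$, and I would group its factors by the process index $\alpha=j$ together with the block index $i$. For fixed $j$ and $i$, the number of factors equal to $f_j(y_i;\cdot)$ is the number of $\gamma$ with $(j,1,\gamma)\in P_i$, which under the above bijection is exactly the quantity $|P_{i,j}|$ defined before the corollary. Collecting the multiplicities yields $\prod_{i=1}^k\prod_{j=1}^l f_j(y_i;\cdot)^{|P_{i,j}|}$, precisely the product displayed on the right-hand side of \eqref{momentForm}. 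Substituting this product and the additive shift into \eqref{moment_integr} finishes the proof. I do not anticipate a real obstacle: the content is entirely in Theorem~\ref{main:Theorem}, and the work here is the relabeling in this last step, where the only thing to verify is that the grouping is a faithful reindexing and not an overcount, which holds because $P$ partitions $S$.
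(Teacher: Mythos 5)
Your proposal is correct and takes exactly the paper's route: the paper's proof of Corollary~\ref{1proc:result} is precisely the specialization of Theorem~\ref{main:Theorem} to $r_1=\cdots=r_l=1$ and $\e\equiv 1$, and your relabeling of $\mathcal{P}(S)$ as $\Pn$ and the grouping of factors into the multiplicities $|P_{i,j}|$ supplies the bookkeeping the paper leaves implicit. The only cosmetic discrepancy — the theorem integrates over $\XX^k_{\neq}$ while the corollary writes $\XX^k$ — is harmless since $\sigma$ is non-atomic, so the diagonals are $\sigma^k$-null.
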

\begin{proof}
The result follows from Theorem \ref{main:Theorem}.
\end{proof}
For $l=1$ we recover  \cite[(1.2)]{MR3021456}:
\begin{align*}
&\E \bigg[ v(\w) \bigg(\int_{\XX} u(x;\w) \w(dx) \bigg)^n \bigg] \\
=&
\sum_{P=\{P_1,\ldots, P_k\} \in \Pn} \!\!\E\bigg[ \int_{\XX^k}  v(\w\cup y) u(y_1;\w\cup y)^{|P_1|} \ldots u(y_k;\w\cup y)^{|P_k|} 
\sigma(dy_1) \ldots \sigma(dy_k)\bigg],
\end{align*}
where $y=\{y_1,\ldots,y_k\}$ and $u\ge 0$ is a $1$-process.
With arbitrary $l$ we obtain an alternative proof of \cite[Theorem~3.1]{MR3210036} for random Poisson measures.
In passing we also refer the reader to recent papers \cite{MR3215537} and \cite{2013arXiv1310.3531B}.

\subsection{The second moment of stochastic integrals of $2$-processes}\label{sec:mf2}
Moments of arbitrary $k$-processes require formulas of increasing complexity,
but they are entirely explicit. Here is a telling example.
\begin{lemma}\label{lem:22}
If $f\ge 0$ is a $2$-process, then
\begin{align}\label{eq:22}
&\E \left(\int_{\XX^2}f(x_1,x_2;\w)\w(dx_1)\w(dx_2)\right)^2= 
\!\! \int_{\XX}\E f^2(x,x;\w \cup \{x\}) \sigma(dx)\\
&+2\int_{\XX_{\neq}^2} \E f(x,x;\w\cup \{x,y\})f(x,y;\w\cup \{x,y\}) \sigma(dx)\sigma(dy)\nonumber\\
&+2\int_{\XX_{\neq}^2} \E f(x,x;\w\cup \{x,y\})f(y,x;\w\cup \{x,y\}) \sigma(dx)\sigma(dy)\nonumber\\
&+\int_{\XX_{\neq}^2} \E f(x,x;\w\cup \{x,y\})f(y,y;\w\cup \{x,y\})\sigma(dx)\sigma(dy)\nonumber\\
&+\int_{\XX_{\neq}^2} \E f^2(x,y;\w\cup \{x,y\})\sigma(dx)\sigma(dy)\nonumber\\
&+\int_{\XX_{\neq}^2} \E f(x,y;\w\cup \{x,y\})f(y,x;\w\cup \{x,y\})\sigma(dx)\sigma(dy)\nonumber\\
&+2\int_{\XX_{\neq}^3} \E f(x,x;\w\cup \{x,y,z\})f(y,z;\w\cup \{x,y,z\}) \sigma(dx)\sigma(dy)\sigma(dz)\nonumber\\
&+2\int_{\XX_{\neq}^3} \E f(x,y;\w\cup \{x,y,z\})f(z,x;\w\cup \{x,y,z\}) \sigma(dx)\sigma(dy)\sigma(dz)\nonumber\\
&+\int_{\XX_{\neq}^3} \E f(x,y;\w\cup \{x,y,z\})f(x,z;\w\cup \{x,y,z\}) \sigma(dx)\sigma(dy)\sigma(dz)\nonumber\\
&+\int_{\XX_{\neq}^3} \E f(y,x;\w\cup \{x,y,z\})f(z,x;\w\cup \{x,y,z\}) \sigma(dx)\sigma(dy)\sigma(dz)\nonumber\\
&+\int_{\XX_{\neq}^4} \E f(x,y;\w\cup \{x,y,z,t\})f(z,t;\w\cup \{x,y,z,t\}) \sigma(dx)\sigma(dy)\sigma(dz)\sigma(dt)\nonumber.
\end{align}
\end{lemma}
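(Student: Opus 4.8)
The plan is to reduce the statement to a single application of the multiple mixed-type Mecke-Palm formula, following the linearization approach advocated in the paragraph preceding the lemma. First I would write the square of the integral as one fourfold Poisson integral,
\[
\left(\int_{\XX^2}f(x_1,x_2;\w)\w(dx_1)\w(dx_2)\right)^2 = \int_{\XX^4} g(x_1,x_2,x_3,x_4;\w)\,\w(dx_1)\w(dx_2)\w(dx_3)\w(dx_4),
\]
where $g(x_1,x_2,x_3,x_4;\w)=f(x_1,x_2;\w)\,f(x_3,x_4;\w)$ is a nonnegative $4$-process. This is an integration of $g$ against $\w_{\e_1}\cdots\w_{\e_4}$ with $\e_1=\e_2=\e_3=\e_4=1$, so I would invoke Theorem~\ref{lem:multiPalmDiag} with $n=4$. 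Since every $\e_i=1$, the admissibility constraint defining $\mathcal{P}_4^\e$ is vacuous, hence $\mathcal{P}_4^\e=\mathcal{P}_4$ and $y_{\e^{[P]}}=\{y_1,\ldots,y_k\}$ for every partition; the right-hand side of \eqref{multiPalmGen} then becomes $\sum_{P\in\mathcal{P}_4}\int_{\XX_{\neq}^k}\E\, g(y^{[P]};\w\cup\{y_1,\ldots,y_k\})\,\sigma^k(dy)$, with $k=|P|$.

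Next I would enumerate the $B_4=15$ partitions of $\{1,2,3,4\}$, organized by the number of blocks $k$, and substitute $y^{[P]}$ into $g$ in each case. The unique partition with $k=1$ forces $y_1^{[P]}=\cdots=y_4^{[P]}$ and yields $f^2(x,x;\cdot)$ over $\XX$, the first term. The seven partitions with $k=2$ produce the five terms integrated over $\XX_{\neq}^2$; for instance $\{1\}|\{2,3,4\}$ and $\{3\}|\{1,2,4\}$ both give $f(x,x;\cdot)f(y,x;\cdot)$ after renaming the dummy variables, which accounts for the coefficient $2$ there, and similarly for $f(x,x;\cdot)f(x,y;\cdot)$. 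The six partitions with $k=3$ (one pair block and two singletons) give the four terms over $\XX_{\neq}^3$, and the unique partition $\{1\}|\{2\}|\{3\}|\{4\}$ with $k=4$ gives the final term over $\XX_{\neq}^4$. Reading off the substituted integrand for each partition and collecting identical contributions reproduces the eleven displayed terms.

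The only real work, and the place where care is needed, is the combinatorial bookkeeping that turns $15$ partitions into $11$ terms with the correct multiplicities. Because the domain $\XX_{\neq}^k$ and the measure $\sigma^k$ are invariant under permutations of $(y_1,\ldots,y_k)$, two distinct partitions can yield equal integrals after a relabeling of the integration variables, and this is precisely what produces the coefficients $2$ in the second, third, seventh and eighth terms. The subtle cases are the $k=3$ partitions that share one variable between the two factors, such as $\{1,4\}|\{2\}|\{3\}$ giving $f(x,y;\cdot)f(z,x;\cdot)$ and $\{2,3\}|\{1\}|\{4\}$ giving $f(y,x;\cdot)f(x,z;\cdot)$: these coincide only after swapping two dummy variables and using commutativity of the product, and recognizing such coincidences correctly is what separates the right answer from an over- or undercount. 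No analytic difficulty arises beyond this enumeration, since the positivity of $f$ lets me apply Theorem~\ref{lem:multiPalmDiag} with no integrability hypotheses.
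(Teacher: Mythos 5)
Your proposal is correct and follows essentially the same route as the paper's proof: linearization of the square into a fourfold Poisson integral of $g(x_1,x_2,x_3,x_4;\w)=f(x_1,x_2;\w)f(x_3,x_4;\w)$, application of Theorem~\ref{lem:multiPalmDiag} with all $\epsilon_i=1$ (so that $\mathcal{P}_4^\epsilon=\mathcal{P}_4$), and enumeration of the $15$ partitions of $\{1,2,3,4\}$ by block structure ($4$, $3+1$, $2+2$, $2+1+1$, $1+1+1+1$), collecting coincident contributions into the eleven displayed terms. Your combinatorial bookkeeping, including the identification of the paired $k=2$ and $k=3$ partitions that produce the coefficients $2$, matches the paper's computation exactly.
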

\begin{proof}
By linearization,
\begin{align*}
&\left(\int_{\XX^2} f(x_1,x_2; \w) \w(dx_1) \w(dx_2) \right)^2 \\
=& \int_{\XX^4} g(x,y,z,t)\w(dx)\w(dy)\w(dz)\w(dt),
\end{align*}
where
$g(x,y,z,t;\w) = f(x,y;\w)f(z,t;\w)$. We will use Theorem \ref{lem:multiPalmDiag}.
The partitions involved have $k=1$, $2$, $3$ or $4$ blocks, because the number $4$ can be represented as the following sums:
$4$, $3+1$, $2+2$, $2+1+1$, $1+1+1+1$.
In particular, the partition of $\{1,2,3,4\}$ with only one block ($k=1$), namely $\{\{1,2,3,4\}\}$, contributes
\begin{align*}
& \E\int_{\XX} g(x,x,x,x;\w\cup \{x\}) \sigma(dx) =  
\int_{\XX}\E f^2(x,x;\w \cup \{x\}) \sigma(dx)
\end{align*}
to \eqref{eq:22}.
Then, partitions with $k=2$ blocks are of type $3+1$ and $2+2$. In the first case there are $4$ different partitions as there are $4$ different choices of the singleton. For instance, $P=\{\{1,2,3\},\{4\}\}$ contributes 
\begin{align*}
&\int_{\XX_{\neq}^2} \E g(x,x,x,y;\w\cup \{x,y\}) \sigma(dx)\sigma(dy)\\
&= \int_{\XX_{\neq}^2} \E f(x,x;\w\cup \{x,y\})f(x,y;\w\cup \{x,y\}) \sigma(dx)\sigma(dy) \nonumber
\end{align*}
to \eqref{eq:22}.
The contribution to \eqref{eq:22} from all the partitions of type $3+1$ are the $2$nd and the $3$rd terms on the right-hand side of (\ref{eq:22}). In the case $2+2$, $P=\{\{ 1,2\},\{3, 4\}\}$, 
contributes 
\begin{align*}
\int_{\XX_{\neq}^2} &\E f(x,x;\w\cup \{x,y\})f(y,y;\w\cup \{x,y\})\sigma(dx)\sigma(dy),
\end{align*}
to \eqref{eq:22}, and the contributions from all the partitions of type $2+2$ are precisely the $4$th through $6$th terms on the right-hand side of (\ref{eq:22}). 

For $k=3$ we have partitions of type $2+1+1$, e.g. $P= \{\{1,2\}, \{3\},\{4\}\}$, which contributes
\begin{align*}
\int_{\XX_{\neq}^3} \E f(x,x;\w\cup \{x,y,z\})f(y,z;\w\cup \{x,y,z\}) \sigma(dx)\sigma(dy)\sigma(dz),
\end{align*}
to \eqref{eq:22},
and all partitions of type $2+1+1$ result in the $7$th through $10$th terms on the right-hand side of (\ref{eq:22}). 
Finally, the partition into $k=4$ singletons yields
  \begin{align*}
\int_{\XX_{\neq}^4} \E f(x,y;\w\cup \{x,y,z,t\})f(z,t;\w\cup \{x,y,z,t\}) \sigma(dx)\sigma(dy)\sigma(dz)\sigma(dt).
\end{align*}
This finishes the verification of (\ref{eq:22}).
\end{proof}

We now investigate the second moment of mixed double stochastic integrals, the ones with respect to the random measures $\w \otimes \sigma$ and $\sigma \otimes \w$.

\begin{lemma}\label{Lem:mixed}
If $f \ge 0$ is a $2$-process, then
\begin{align}
&\E\left( \int_{\XX^2} f(x_1, x_2; \w) \w(dx_2) \sigma(dx_1) \right)^2 \nonumber\\  
&\qquad=\E\left( \int_{\XX^2} f(x_1, x_2; \w) \sigma(dx_1) \w(dx_2) \right)^2 \label{mixed:2proc2}\\
&\qquad = \E \int_{\XX_{\neq}^3} f(x,y; \w \cup \{ y \}) f(z, y; \w \cup \{ y \}) \sigma(dx)\sigma(dy)\sigma(dz)  \label{mixed:2proc1}\\
&\qquad + \E \int_{\XX_{\neq}^4} f(x,y; \w \cup \{ y, t \}) f(z, t; \w \cup \{ y,t \}) \sigma(dx)\sigma(dy)\sigma(dz)\sigma(dt). \nonumber
\end{align}
\end{lemma}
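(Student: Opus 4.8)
The plan is to first dispose of the equality \eqref{mixed:2proc2} and then compute the common value by linearization followed by an application of Theorem~\ref{lem:multiPalmDiag}. Since $f\ge 0$, for each fixed $\w$ the two inner integrals $\int_{\XX^2} f(x_1,x_2;\w)\,\w(dx_2)\sigma(dx_1)$ and $\int_{\XX^2} f(x_1,x_2;\w)\,\sigma(dx_1)\w(dx_2)$ are just the two iterated integrals, in either order, of the same nonnegative integrand against the product measure $\sigma\otimes\w$. Hence they coincide by Tonelli's theorem; squaring and taking expectations yields \eqref{mixed:2proc2}, so it remains only to evaluate the right-hand side.

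For this I would linearize the square exactly as in the proof of Lemma~\ref{lem:22}, writing
\begin{align*}
\left(\int_{\XX^2} f(x_1,x_2;\w)\,\sigma(dx_1)\w(dx_2)\right)^2
=\int_{\XX^4} g(x_1,x_2,x_3,x_4;\w)\,\w_0(dx_1)\w_1(dx_2)\w_0(dx_3)\w_1(dx_4),
\end{align*}
where $g(x_1,x_2,x_3,x_4;\w)=f(x_1,x_2;\w)f(x_3,x_4;\w)$ and we recall the convention $\w_1=\omega$, $\w_0=\sigma$. The relevant exponent vector is therefore $\e=(0,1,0,1)$, and I would apply \eqref{multiPalmGen} with $n=4$ to this $\e$.

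The key step is the identification of $\Pn^{\e}$. Because $\e_1=\e_3=0$, the defining condition of $\Pn^{\e}$ forces the indices $1$ and $3$ into singleton blocks, while $2$ and $4$ may be either joined or kept separate. Thus $\Pn^{\e}$ contains exactly two partitions, namely $\{\{1\},\{2\},\{3\},\{4\}\}$ (with $k=4$) and $\{\{1\},\{3\},\{2,4\}\}$ (with $k=3$). This sharp collapse, from the fifteen partitions that governed the pure $\w$-integral in Lemma~\ref{lem:22} down to only two here, is the crux of the computation and the reason the mixed second moment has such a short closed form; I expect the main (though still elementary) obstacle to be verifying precisely this reduction and not overlooking any admissible partition.

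It then remains to read off the contribution of each partition from \eqref{multiPalmGen}. For the partition into four singletons one has $y^{[P]}=(y_1,y_2,y_3,y_4)$ and $y_{\e^{[P]}}=\{y_2,y_4\}$, so $g(y^{[P]};\w\cup y_{\e^{[P]}})=f(y_1,y_2;\w\cup\{y_2,y_4\})f(y_3,y_4;\w\cup\{y_2,y_4\})$; relabelling $(y_1,y_2,y_3,y_4)=(x,y,z,t)$ produces the $\XX_{\neq}^4$ term. For $\{\{1\},\{3\},\{2,4\}\}$ the joined block $\{2,4\}$ carries the single $\sigma$-insertion, so with a convenient ordering of blocks $y^{[P]}=(y_1,y_3,y_2,y_3)$ and $y_{\e^{[P]}}=\{y_3\}$, whence $g(y^{[P]};\w\cup\{y_3\})=f(y_1,y_3;\w\cup\{y_3\})f(y_2,y_3;\w\cup\{y_3\})$; relabelling $(y_1,y_2,y_3)=(x,z,y)$ yields the $\XX_{\neq}^3$ term. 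Summing the two contributions gives \eqref{mixed:2proc1}. The only points demanding care are the bookkeeping of which coordinate is inserted into $\w$ (dictated by $\e^{[P]}$) and the relabelling of the dummy variables, both of which are routine and permitted since $\XX_{\neq}^k$ and the $\sigma$-integrations are symmetric.
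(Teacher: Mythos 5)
Your proposal is correct and takes essentially the same route as the paper: \eqref{mixed:2proc2} via Fubini--Tonelli, then linearization to an integral against $\w_0(dx_1)\w_1(dx_2)\w_0(dx_3)\w_1(dx_4)$ and an application of Theorem~\ref{lem:multiPalmDiag}. Your explicit enumeration of the two admissible partitions in $\mathcal{P}_4^{\e}$ for $\e=(0,1,0,1)$ (singletons forced at the $\sigma$-indices $1$ and $3$) correctly fills in the bookkeeping that the paper compresses into ``as in the proof of Lemma~\ref{lem:22}.''
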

\begin{proof}
The equation \eqref{mixed:2proc2} follows from Fubini-Tonelli. Then the expectation in \eqref{mixed:2proc2} is written as
\begin{equation*}
\E\int_{\XX^4}f(x, y; \w) f(z, t; \w) \sigma(dx) \w(dy) \sigma(dz) \w(dt),
\end{equation*}
and by Theorem \ref{lem:multiPalmDiag} we get the equality \eqref{mixed:2proc1}, as in the proof of Lemma~\ref{lem:22}.
\end{proof}

\section{L\'evy systems}\label{s:Ls}
An important motivation for this work is due to the so-called L\'evy systems for L\'evy processes.
These are identities between expectations of sums taken with respect to the jumps of a L\'evy process and expectations of integrals taken with respect to the corresponding intensity measure. 
There exist a considerable variety of (multiple) L\'evy systems, as we discuss below.

\subsection{General result}\label{sec:gLs}

We consider (time) $\Rp=(0,\infty)$, (space) $\Rd$ and (space-time) $\Rp\times \Rd$. 
 
Let $\nu$ be a non-zero L\'evy measure on $\Rd$, thus $\nu (\{0 \}) = 0$ and
\begin{displaymath}
\int_{\R^d} \min \{1, z^2\} \nu(dz) < \infty.
\end{displaymath}
Let $X= \{X_t\}_{t\ge 0}$ be a L\'evy process in $\Rd$
with L\'evy triplet $(\nu,A,b)$, where $A$ is a symmetric, nonnegative-definite $d \times d$ matrix and $b \in \R^d$
\cite{MR1739520}. Let $\PP$ and $\E$ be the distribution and the expectation of the process and consider
$$p_t(A)=\Prawd(X_t\in A),$$ the convolution semigroup of $X$.
 Let $\Delta X_u = X_u - X_{u-}$ and
$$
\omega  = \sum_{u>0, \, \Delta X_u \ne 0} \delta_{(u, \Delta X_u)}.
$$
Then $\omega$ is a Poisson random measure with the intensity (control) measure $\sigma(du dz) = du\nu(dz)$ on $$\XX = \Rp \times \Rdz$$
\cite[Section~I.9, Section~II.3, Example~II.4.1]{MR1011252} related to
$X$ by 
the L\'evy-It\^o decomposition \cite[Chapter~4]{MR1739520}, \cite[Example~II.4.1]{MR1011252}.
We may and do identify $\omega$, $\PP$ and $\E$ with those from Section~\ref{sec:iMPf} given by $\sigma(dudz)=du\nu(dz)$.
The well-known (simple) L\'evy system is the following identity (more comments are given after the proof).
\begin{lemma}\label{lem2} 
If $F\,:\; \Rp\times \R^d\times \R^d\to\Rb$ is nonnegative,  then  
  \begin{equation}\label{eq1.3}
    \E \sum_{\substack{0< u < \infty \\ \dX_u \neq 0}} F(u,X_{u-}, X_{u})
= \E \int\limits_{0}^{\infty} \int\limits_{\R^d} F(u,X_{u},
X_{u}+z)\nu(dz)du\,.
  \end{equation}

\end{lemma}
\begin{proof}
First, let $X$ be a compound Poisson process, that is $\nu(\Rd)<\infty$, $X(t)=\sum_{i=1}^{N(t)}Z_i$, where $N(t)$ has Poisson distribution with expectation $t\nu(\Rd)$, and $Z_i$ are i.i.d. random variables with distribution $\nu/\nu(\Rd)$. Therefore
$$p_{t} = e^{-|\nu|t}e^{*t\nu} = e^{-|\nu|t} \sum\limits_{n=0}^{\infty} \frac{t^n \nu^{*n}}{n!}.$$
By Fubini-Tonelli theorem the right-hand side of (\ref{eq1.3}) equals
\begin{equation}\label{eq1.2}
\int\limits_0^{\infty} \int\limits_{\R^d} \int\limits_{\R^d} F(u,x,x+z) p_{u}(dx) \nu(dz)du.
\end{equation}
Let $S_i=\inf\{t>0: N(t)=i\}$, the arrival time of the $i$-th jump of $X$. 
Recall that $S_i$ has gamma distribution,
and clearly $X_{S_i}$ has distribution $\widetilde{\nu}^{*i}$. 
By Fubini-Tonelli the left-hand side of (\ref{eq1.3}) equals
\begin{eqnarray*}
&&\E \sum_{i=1}^{\infty}F(S_i,X_{S_i -}, X_{S_i})\\
 &=& \sum_{i=1}^{\infty} \int\limits_0^{\infty}\int\limits_{\R^d}\int\limits_{\R^d}  F(u,x,x+z)\frac{|\nu|^i u^{i-1}}{(i-1)!} 
e^{-|\nu|u}~ \widetilde{\nu}^{*(i-1)}(dx)\widetilde{\nu}(dz)du\\
 &=& \int\limits_0^{\infty} \int\limits_{\R^d}\int\limits_{\R^d}F(u,x,x+z)\bigg(e^{-|\nu|u}\sum_{i=1}^{\infty}\frac{u^{i-1}\nu^{*(i-1)}}{(i-1)!}(dx)\bigg)
\nu(dz)du\\
 &=& \int\limits_0^{\infty} \int\limits_{\R^d}\int\limits_{\R^d} F(u,x,x+z)p_{u}(dx) \nu(dz)du.
\end{eqnarray*}
This yields (\ref{eq1.3}) for compound Poisson process $X$.
Now let $X$ be a general L\'evy process. We shall prove that for every $\e > 0$,
\begin{equation}\label{eq1.4}
\E \sum\limits_{\substack{0<u < \infty \\ |\dX_u| \geq \e}}F(u, X_{u-}, X_{u}) = \E \int_{0}^{\infty} \int_{|z| \geq \e} F(u,X_{u},
X_{u}+z)\nu(dz)dv.
\end{equation}

\noindent To this end we use the following decomposition,
\begin{displaymath}
X_t = V_t +Z_t.
\end{displaymath}
The terms in the decomposition have the following properties. 
\noindent Process $V_t$ is a L\'evy process with the triplet $(A, \nu \big|_{|z| < \e},b)$, on a probability space $(\Omega^V, \F^V, \Prawd^V)$.
Here $\nu \big|_{|z| < \e}$ is the measure $\nu$ restricted to $\{z \in \R^d: |z|< \e \}$.
$Z_t$ is a compound Poisson process on an independent probability space $(\Omega^Z, \F^Z, \Prawd^Z)$, and has the L\'evy measure $\nu \big|_{|z| \geq \e}$. We denote by $\E^V, \E^Z$ and $\Prawd^V$, $\Prawd^Z$ the corresponding expectations and probabilities. We may assume that $\Omega = \Omega^V \times \Omega^Z$ and $\Prawd = \Prawd^V \otimes \Prawd^Z$, 
according to the fact that $V$ and $Z$ are independent. In what follows we consider
\begin{equation}\label{eq:dec}
\widetilde{F}(v,x,y) = F(v, V_{v-} + x, V_v + y).
\end{equation}
By Fubini-Tonelli theorem and by (\ref{eq1.3}) for the compound Poisson process $Z$, the left hand side of (\ref{eq1.4}) becomes
\begin{eqnarray*}
&& \E^V \E^Z \sum\limits_{ |\Delta(V_u+Z_u)| \geq \e} F(u, V_{u-} +Z_{u-}, V_{u} + Z_{u})\\ 
&=& \E^V \E^Z \sum\limits_{ |\Delta Z_u| \geq \e}\widetilde{F}(u, Z_{u-}, Z_{u})
= \E^V \E^Z \int\limits_{0}^{\infty} \int\limits_{|z| \geq \e} \widetilde{F}(u, Z_{u}, Z_{u}+z) \nu(dz)du\\
&=& \E \int\limits_{0}^{\infty} \int\limits_{|z| \geq \e} F(u, X_{u}, X_{u} +z) \nu(dz)du.
\end{eqnarray*}
We have proved (\ref{eq1.4}). Let $\e \downarrow 0$.
By monotone convergence theorem,
\begin{equation}\label{eq1.5}
\E \sum\limits_{ |\dY_u| \geq \e}F(u, X_{u-}, X_{u}) \rightarrow \E \sum\limits_{\dY_u \neq 0 }F(u, X_{u-} ,X_{u}),
\end{equation}
and 
\begin{equation}\label{eq1.6}
\E \int\limits_{0}^{\infty} \int\limits_{|z| \geq \e} F(u, X_{u},X_{u}+z)\nu(dz)du \rightarrow \E \int\limits_{0}^{\infty} \int\limits_{\R^d} F(u,X_{u},X_{u}+z)\nu(dz)du.
\end{equation}
\noindent By (\ref{eq1.6}), (\ref{eq1.5}) and (\ref{eq1.4}) we obtain (\ref{eq1.3}).
\end{proof}
Lemma~\ref{lem2} asserts that 
the expected sum over the jumps of the L\'evy process $X$ equals to the expectation of the integral with respect to the corresponding intensity measure. As we remarked, the result is well-known, see \cite{MR0343375}, \cite[p. 375]{MR745449}, \cite[VII.2(d)]{MR1138461}, but the above direct proof seems original and
will be referred to below in extensions which we call multiple mixed L\'evy systems.
Before presenting them we propose a reformulation of Lemma \ref{lem2}.
\begin{lemma}\label{lem2a} 
If $F\,:\; \Rp\times \R^d\times \R^d\to\Rb$ is nonnegative, then  
  \begin{equation*}
    \E \sum_{\substack{0< u < \infty \\ \dX_u \neq 0}} F(u,X_{u-}, \dX_{u})
= \E \int\limits_{0}^{\infty} \int\limits_{\R^d} F(u,X_{u},
z)\nu(dz)du\,.
  \end{equation*}
\end{lemma}
Here $\Rp=(0,\infty)$. The multiple mixed L\'evy systems can be described within the framework presented in the previous sections.
We consider the ``simplex''
$$
\XX_{<}^n=\{(u_1,z_1; \dots; u_n,z_n) \in \XX^n: 0<u_1<\cdots<u_n \}.
$$
The following defines the (complete set of the multiple) mixed L\'evy systems.
\begin{theorem}\label{E} Let $X$ be a L\'evy process in $\Rd$ with the L\'evy measure $\nu$, the expectation $\E$ and the Poisson random measure of jumps $\omega$.
Let $\epsilon_1, \ldots, \epsilon_n \in \{ 0,1\}$ and let $F: ( \Rp \times\Rd \times \Rd)^n \mapsto [0,\infty]$ be measurable. Then,
\begin{align} 
\label{Sk}
  &\E \int_{\XX^n_{<}} F( u_1,X_{u_1-}, z_1; \dots;  u_n,X_{u_n-}, z_n)\w_{\epsilon_1}(du_1dz_1) \ldots \w_{\epsilon_n}(du_ndz_n)\\
=&\int_{\XX^n_{<}} \E F( u_1,X_{u_1-},z_1; \dots; 
  u_j,X_{u_j-} + \sum_{i=1}^{j-1} \epsilon_i  z_i ,\ z_j; \dots; \nonumber\\
 & \qquad u_n,X_{u_n-} + \sum_{i=1}^{n-1}  \epsilon_i  z_i ,\ z_n) \;  du_1  \nu(dz_1)\cdots du_n \nu(dz_n)\nonumber\\
=&\int_{\XX^n_{<}} \int_{(\Rd)^n}   F( u_1,y_1,z_1; \dots; 
 u_n,\sum_{i=1}^{n}y_i + \sum_{i=1 }^{n-1} \epsilon_i  z_i , z_n) \nonumber\\
&\qquad  p_{u_1}(dy_1)\ldots p_{u_n-u_{n-1}}(dy_n)\,du_1 \nu(dz_1)\cdots du_n \nu(dz_n). 
\label{eq:aaa3}
\end{align}
\end{theorem}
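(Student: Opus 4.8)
The plan is to reduce the statement about the ordered simplex $\XX_{<}^n$ to the diagonal-free Mecke--Palm machinery of Theorem~\ref{lem:multiPalmDiag}, specialized to the space-time setting $\XX=\Rp\times\Rdz$ with control measure $\sigma(dudz)=du\,\nu(dz)$. The key observation is that on $\XX_{<}^n$ the time coordinates are strictly ordered, hence automatically distinct, so no two points $(u_i,z_i)$ and $(u_j,z_j)$ with $i\ne j$ can coincide. This means the integration is already supported on $\XX_{\neq}^n$, and among all partitions $P\in\Pn^{\e}$ only the partition into singletons survives: every block must be a singleton because coinciding points are excluded by the strict ordering $u_1<\cdots<u_n$. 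Consequently the sum over partitions in \eqref{multiPalmGen} collapses to a single term, with $k=n$ and $y^{[P]}=y$.

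First I would set up the identification carefully. The process $F(u_1,X_{u_1-},z_1;\dots;u_n,X_{u_n-},z_n)$ is to be viewed as an $n$-process $f$ on $\XX^n\times\Omega$, where the dependence on $\omega$ enters through the values $X_{u_i-}$. The crucial point is to express $X_{u-}$ in terms of the configuration $\omega=\sum_{\dX_s\ne0}\delta_{(s,\dX_s)}$: by the L\'evy--It\^o decomposition we may write $X_{u-}$ as the sum of a continuous (Gaussian-plus-drift) part independent of $\omega$ together with the accumulated jumps $\sum_{s<u}\dX_s$, which is read off from $\omega$ restricted to times before $u$. Then I would apply Theorem~\ref{lem:multiPalmDiag} with all points kept on the simplex; the effect of the operator $\omega\mapsto\omega\cup y_{\e^{[P]}}$ is to insert, at location $(u_i,z_i)$, a jump $z_i$ into the configuration precisely when $\epsilon_i=1$. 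Because the simplex is time-ordered, inserting the jump $(u_i,z_i)$ affects $X_{u_j-}$ exactly for those $j>i$, which produces the shift $X_{u_j-}+\sum_{i=1}^{j-1}\epsilon_i z_i$ appearing in the middle expression of the theorem. This is the content of the first equality.

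The second equality \eqref{eq:aaa3} is then obtained by computing the expectation over the continuous part, i.e.\ by disintegrating the law of the pre-jump positions. Writing $y_1=X_{u_1-}$ and, for $j\ge2$, letting $y_j$ denote the increment of the shifted process over $(u_{j-1},u_j)$, the Markov (independent-increments) property of $X$ gives that these increments are independent with laws $p_{u_1},p_{u_2-u_1},\dots,p_{u_n-u_{n-1}}$; substituting $X_{u_j-}+\sum_{i<j}\epsilon_i z_i=\sum_{i\le j}y_i+\sum_{i<j}\epsilon_i z_i$ yields the stated convolution form. I expect the main obstacle to be the bookkeeping of exactly which previously inserted jumps contribute to each pre-jump value $X_{u_j-}$: one must verify that inserting $(u_i,z_i)$ via $\omega\cup y_{\e^{[P]}}$ shifts $X_{u_j-}$ by $z_i$ for all $j>i$ but not for $j\le i$, consistent with the strict inequalities $u_i<u_j$ and the left-continuity in the pre-jump limit $X_{u_j-}$. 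A secondary technical point, as in the proof of Lemma~\ref{lem2}, is a compound-Poisson approximation or truncation argument to justify working with the jump configuration $\omega$ and to handle the possibly infinite L\'evy measure; monotone convergence then removes the truncation since $F\ge0$.
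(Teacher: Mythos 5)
Your proposal is correct and follows essentially the same route as the paper: the strict ordering on $\XX^n_{<}$ makes the integrand vanish on the diagonals so that only the singleton partition survives in Theorem~\ref{lem:multiPalmDiag}, inserting an atom $(u_i,z_i)$ into $\w$ shifts $X_{u_j-}$ by $\epsilon_i z_i$ exactly for $j>i$, and \eqref{eq:aaa3} then follows from stationary independent increments and Fubini--Tonelli. The only difference is one of ordering: the paper proves the compound Poisson case \emph{first} (where $X_{u-}$ is a genuine, everywhere-defined $1$-process on the configuration space, so the Mecke--Palm formula applies directly) and then extends via the $X=V+Z$ decomposition and monotone convergence as in Lemma~\ref{lem2}, whereas you apply the formula to the general process and defer this truncation to a closing remark --- a presentational rather than substantive discrepancy, since your final paragraph identifies precisely the needed reduction.
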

\begin{proof}
We first prove this result for compound Poisson process $X$.
By the L\'evy-It\^o decomposition 
for $t\ge 0$ we have
$$X_{t-} = X_{t-}(\w) = 
\int\limits_{(0,t)}\int\limits_{\Rd} z \omega(dudz) -t\nu(\Rd),$$
and
$$X_t = X_t(\w) = 
\int\limits_{(0,t]}\int\limits_{\Rd} z \omega(dudz) -t\nu(\Rd).$$
We note that $X_{t-}$ is a $1$-process on $\XX$, and $$\1_{\XX^n_{<}}( u_1, z_1; \dots;  u_n, z_n)F( u_1,X_{u_1-}, z_1; \dots;  u_n,X_{u_n-}, z_n)$$ is an $n$-process, which vanishes on the diagonals.
 Using the notation from the proof of Lemma \ref{lem:multiPalm}, by Theorem~\ref{lem:multiPalmDiag} we see that the left-hand side of \eqref{Sk} equals 
\begin{align*}
\int_{\XX^n_{<}} \E F\big(&u_1,X_{u_1-}(\omega  + \sum_{i=1}^n \epsilon_i \delta_{(u_i,z_i)}  ) , z_1; \dots; u_n, X_{u_n-}(\omega  + \sum_{i=1}^n \epsilon_i \delta_{(u_i,z_i)}), z_n\big) \\
&du_1 \nu(dz_1)\cdots du_n \nu(dz_n).
\end{align*}
Since
$$
X_{u_j-}(\omega  + \sum_{i=1}^j \epsilon_i \delta_{(u_i,z_i)}) = X_{u_j-}(\w) + \sum_{i=1}^{j-1}\epsilon_i  z_i,
$$
\eqref{Sk} follows. Then we note that the distribution of $X_{u-}$ is the same as that of $X_u$, which is
$p_u$, and we use Fubini-Tonelli to get \eqref{eq:aaa3}. This resolves the case of compound Poisson processes.
The case of general L\'evy processes follows as in the proof of Lemma~\ref{lem2}.
\end{proof}

\begin{corollary}\label{lem4}
If $X$ is a L\'evy process and  $F$ is nonnegative, then
 \begin{eqnarray}\label{eq2.3}
&&\E{\sum_{\substack{0< u_1<\ldots<u_n \leq \infty \\ \dX_{u_1} \neq 0, \ldots, \dX_{u_n} \neq 0}} F(u_1,X_{u_1-}, X_{u_1};\ldots; u_n,X_{u_n-}, X_{u_n})}\\
&&= \E \int\limits_0^{\infty} \ldots \int\limits_{u_{n-1}}^{\infty} \int\limits_{\R^d} \ldots \int\limits_{\R^d} F(u_1,X_{u_1}, X_{u_1}+z_1;\ldots; \nonumber\\
&&u_n,X_{u_n}+z_1+\ldots+z_{n-1}, X_{u_n}+z_1+\ldots+z_n)\nu(dz_n)\ldots\nu(dz_1)du_n \ldots du_1. \nonumber
  \end{eqnarray}
\end{corollary}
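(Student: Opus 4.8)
The plan is to obtain \eqref{eq2.3} as the special case $\epsilon_1=\cdots=\epsilon_n=1$ of Theorem~\ref{E}, after a change of integrand that turns the pair (pre-jump position, jump size) into the pair (pre-jump position, post-jump position). When $\epsilon_i=1$ every measure $\w_{\epsilon_i}$ is the jump measure $\w=\sum_{u>0,\,\dX_u\neq 0}\delta_{(u,\dX_u)}$, so integrating a nonnegative $n$-process over the simplex $\XX^n_{<}$ against $\w(du_1dz_1)\cdots\w(du_ndz_n)$ reduces to the sum over ordered $n$-tuples $0<u_1<\cdots<u_n$ of distinct jump times, with each $z_i$ evaluated at $\dX_{u_i}$. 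This is exactly the summation on the left of \eqref{eq2.3}.

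Concretely, I would apply Theorem~\ref{E} with all $\epsilon_i=1$ to the $n$-process
\[
\widetilde F(u_1,x_1,z_1;\dots;u_n,x_n,z_n):=F(u_1,x_1,x_1+z_1;\dots;u_n,x_n,x_n+z_n),
\]
which is nonnegative and measurable whenever $F$ is. On the left of \eqref{Sk} the position slot is filled by $X_{u_j-}$ and $z_j$ by $\dX_{u_j}$; since $X_{u_j}=X_{u_j-}+\dX_{u_j}$, we get $\widetilde F(u_1,X_{u_1-},\dX_{u_1};\dots)=F(u_1,X_{u_1-},X_{u_1};\dots)$, so the left side of \eqref{Sk} becomes the left side of \eqref{eq2.3}.

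For the right-hand side I would read off the middle expression in Theorem~\ref{E}. With $\epsilon_i=1$ the $j$-th position argument is $X_{u_j-}+\sum_{i<j}z_i$, so expanding $\widetilde F$ the $j$-th triple of $F$ becomes $F\bigl(u_j,\,X_{u_j-}+\sum_{i<j}z_i,\,X_{u_j-}+\sum_{i\le j}z_i\bigr)$; pulling $\E$ out by Tonelli and using stochastic continuity ($X_{u_j-}=X_{u_j}$ a.s.\ at the fixed time $u_j$) yields \eqref{eq2.3}. The one point that must be stated carefully --- and which I expect to be the crux --- is the meaning of the symbol $X_{u_j}$ on the right of \eqref{eq2.3}: it denotes the value at $u_j$ of the path into which the earlier jumps $z_1,\dots,z_{j-1}$ have been interlaced at $u_1,\dots,u_{j-1}$, i.e.\ precisely the Mecke--Palm shift $X_{u_j-}+\sum_{i<j}z_i$ produced by Theorem~\ref{E}, so that $X_{u_j}+z_j=X_{u_j-}+\sum_{i\le j}z_i$. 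With this reading (which for $n=1$ collapses to Lemma~\ref{lem2a}, where no accumulation occurs) the two sides match term by term; conflating this with the plain process value would be the natural pitfall.

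Finally, I would note that no separate passage from compound Poisson to general L\'evy processes is needed, since Theorem~\ref{E} is already stated for an arbitrary L\'evy process $X$ and the truncation-and-monotone-convergence argument of Lemma~\ref{lem2} is inherited. The nonnegativity of $F$ lets me invoke Tonelli freely throughout, both for the reduction of the jump integrals to sums and for interchanging $\E$ with the $du\,\nu(dz)$ integrations.
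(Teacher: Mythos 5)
Your proposal is correct and is essentially the paper's own (implicit) proof: Corollary~\ref{lem4} is exactly the specialization of Theorem~\ref{E} to $\epsilon_1=\cdots=\epsilon_n=1$ applied to the integrand $\widetilde F(u_1,x_1,z_1;\dots;u_n,x_n,z_n)=F(u_1,x_1,x_1+z_1;\dots;u_n,x_n,x_n+z_n)$, as you do. Your insistence that $X_{u_j}$ on the right-hand side of \eqref{eq2.3} must be read as the Mecke--Palm-shifted value $X_{u_j-}+\sum_{i<j}z_i$ is also the intended reading: it is the only one under which the identity holds for $n\ge 2$ (with the plain process value the two sides genuinely differ), and it is precisely the accumulation of earlier jumps that the paper writes out explicitly in the analogous Corollary~\ref{lem6}.
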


\begin{corollary}\label{lem6}If $X$ is a L\'evy process and $F$ is nonnegative, then  
\begin{eqnarray}\label{eq3.5}
    &&\E{\sum_{\substack{0< s < \infty \\ \dY_{s} \neq 0}} \int_{s}^{\infty} \int_{\R^d}
F(s,X_{s-}, X_{s};s_1,X_{s_1}, X_{s_1}+z_1)\nu(dz_1)ds_1}\\
&=& \E \int_0^{\infty}  \int_{\R^d} \sum_{\substack{s<s_1 < \infty \\ \dX_{s_1} \neq 0}} 
F(s,X_{s}, X_{s}+z; s_1,X_{s_1-}+z, X_{s_1}+z)\nu(dz)ds\nonumber\\
&=& \E \int\limits_0^{\infty} \int\limits_{s}^{\infty} \int\limits_{\R^d}\int\limits_{\R^d}
F(s,X_{s}, X_{s}+z; s_1,X_{s_1}+z, X_{s_1}+z+z_1) \nu(dz_1)\nu(dz)ds_1 ds. \nonumber
  \end{eqnarray}
\end{corollary}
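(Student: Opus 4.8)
The plan is to read Corollary~\ref{lem6} as a double application of Theorem~\ref{E} with $n=2$, for two different choices of the interlacing pattern $(\epsilon_1,\epsilon_2)$. The left-hand side of \eqref{eq3.5} sums over the jumps at the first time $s$ and integrates against the intensity $\nu(dz_1)\,ds_1$ at the second time $s_1>s$; this is the pattern $\epsilon=(1,0)$, i.e.\ integration against $\w_1=\omega$ in the first slot and against $\w_0=\sigma$ in the second. The middle line of \eqref{eq3.5} reverses these roles (intensity at $s$, jump sum at $s_1$), which is the pattern $\epsilon=(0,1)$. Both patterns, fed through Theorem~\ref{E}, collapse to one and the same fully integrated expression, namely the last line of \eqref{eq3.5}; this common value is what forces all three quantities to coincide.

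The one point that needs care is notation: Theorem~\ref{E} records the pre-jump position together with the jump size, $F(u,X_{u-},z)$, whereas Corollary~\ref{lem6} records pre- and post-jump positions, $F(s,X_{s-},X_s;\dots)$. First I would pass between the two by an auxiliary function. For the identity (first line)$=$(last line) I set
\[
\tilde F(u_1,a_1,z_1;u_2,a_2,z_2)=F(u_1,a_1,a_1+z_1;\,u_2,a_2,a_2+z_2),
\]
so that, since $X_s=X_{s-}+\dX_s$ at a jump while $X_{s_1}=X_{s_1-}$ for a.e.\ $s_1$, the left-hand side of \eqref{eq3.5} equals $\E\int_{\XX_{<}^2}\tilde F(u_1,X_{u_1-},z_1;u_2,X_{u_2-},z_2)\,\w_1(du_1dz_1)\w_0(du_2dz_2)$, the left-hand side \eqref{Sk} of Theorem~\ref{E}. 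Applying Theorem~\ref{E} with $\epsilon=(1,0)$ inserts the cumulative shift $\sum_{i<j}\epsilon_iz_i$, which here adds $+z_1$ to the second pre-position; unwinding $\tilde F$ and renaming the integration variables then yields the last line of \eqref{eq3.5}.

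For the identity (middle line)$=$(last line) I would instead absorb the shift into the test function from the start, setting
\[
G(u_1,a_1,z_1;u_2,a_2,z_2)=F(u_1,a_1,a_1+z_1;\,u_2,a_2+z_1,a_2+z_1+z_2).
\]
A direct check (using $X_{s_1-}+\dX_{s_1}=X_{s_1}$ and $X_{s-}=X_s$ a.e.) shows that the middle line of \eqref{eq3.5} equals $\E\int_{\XX_{<}^2}G(u_1,X_{u_1-},z_1;u_2,X_{u_2-},z_2)\,\w_0(du_1dz_1)\w_1(du_2dz_2)$, i.e.\ the left-hand side of Theorem~\ref{E} with $\epsilon=(0,1)$. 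Now $\epsilon_1=0$, so no further shift is introduced, and unwinding $G$ reproduces exactly the same last line of \eqref{eq3.5}, closing the chain of equalities.

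The substance of the argument is thus entirely contained in Theorem~\ref{E}; the main obstacle is purely bookkeeping: choosing $\tilde F$ and $G$ so that the shifts $\sum_{i<j}\epsilon_iz_i$ land in the correct arguments, and justifying the replacement of $X_{u-}$ by $X_u$ under the $du\,\nu(dz)$ integrations. The latter is legitimate because the jump times of $X$ form a Lebesgue-null set, so $X_{u-}=X_u$ for a.e.\ $u$. Nonnegativity of $F$ (hence of $\tilde F$ and $G$) lets me invoke Fubini--Tonelli freely and inherit, without extra work, the compound-Poisson-to-general reduction already carried out in the proof of Theorem~\ref{E}.
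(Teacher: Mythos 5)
Your proposal is correct and matches the paper's intended derivation: the paper gives no separate argument for Corollary~\ref{lem6}, presenting it exactly as you do, namely as the $n=2$ case of Theorem~\ref{E} with the two interlacing patterns $\epsilon=(1,0)$ and $\epsilon=(0,1)$, each of which collapses via \eqref{eq:aaa3} to the fully integrated last line of \eqref{eq3.5}. Your bookkeeping functions $\tilde F$ and $G$, together with the a.e.\ replacement of $X_{u-}$ by $X_{u}$ under the $du\,\nu(dz)$ integrations, are precisely the right way to pass between the jump-size notation $F(u,X_{u-},z)$ of Theorem~\ref{E} and the pre/post-jump notation of the corollary.
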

We note in passing that Corollary~\ref{lem4} and Corollary~\ref{lem6} can also be proved without using Mecke-Palm formula, in a way similar to the first part of the proof of Lemma~\ref{lem2}, see \cite{dissert}. The proofs are quite involved 
and the proof of the general mixed L\'evy systems is fraught with problems if similar approach is to be used.
On the contrary, Theorem~\ref{E} offers a clear insight into the structure of multidimensional mixed-type L\'evy systems. The structure is explained by accumulating $z_i$, the $i$-th variable of the integrations performed in
\eqref{eq:aaa3}, as a jump of the process $X$ at the moment $u_i$,  but only if 
$z_i$ is integrated against the Poisson random measure, rather than it's control measure.
By accumulation we mean that such jumps are indeed added to the trajectory of the process.
We encourage the reader to consider the statement of Corollary~\ref{lem6} from this perspective.
Notably, the complex machinery of stochastic analysis of general Markov processes, e.g., the notion of predictability  plays little role in the above treatment of L\'evy systems for L\'evy processes.
\begin{remark}\label{rem:gLs}
{\rm 
We note that Theorem~\ref{E} may be generalized to allow for $n$-processes more complicated than
$F( u_1,X_{u_1-}, z_1; \dots;  u_n,X_{u_n-}, z_n)$, with similar proofs based on the mixed Mecke-Palm formula.
Such extensions may involve modifications by predictable factors, cf. \cite[p. 375]{MR745449}, \cite[VII.2(d)]{MR1138461}, and integrating processes which are not adapted to the usual filtration associated with the L\'evy process. 
}
\end{remark}
To illustrate Remark~\ref{rem:gLs} we give the following classical extension, cf. \cite[VII.2(d)]{MR1138461}.
An additional discussion is given at the end of Section~\ref{sec:asLs}.
\begin{lemma}\label{lempr} 
If $F\ge 0$ and $g_t\ge 0$ is predictable, then  
  \begin{equation}
    \E \sum_{\substack{0< u < \infty \\ \dX_u \neq 0}} g_uF(u,X_{u-}, X_{u})
= \E \int\limits_{0}^{\infty} \int\limits_{\R^d} g_u F(u,X_{u},
X_{u}+z)\nu(dz)du\,.
  \end{equation}
\end{lemma}
\begin{proof}
$g_uF(u,X_{u-}, X_{u})=g_u(\w)F(u,X_{u-}(\w), X_{u-}(\w)+z)$ is a $1$-process on $\Rp\times \Rd$.  By predictability, $g_u(\w+\delta_{(u,z)})=g_u(\w)$ with almost surely. The result follows from the usual Mecke-Palm identity \eqref{mSi}.
\end{proof}

\subsection{Applications}\label{sec:asLs}
The purpose of this section is to show usability of our formulas.
A typical application of the L\'evy system is to the well-known Ikeda-Watanabe formula \cite{MR0142153}, given as \eqref{eq:IW} below. 
The formula concerns the situation of the  L\'evy process $X$ in $\Rd$ as it reaches the complement of
the open set $D\subset \Rd$.
We shall use the usual Markovian notation: for $x\in \Rd$ we write $\E^x$ and $\PP^x$ for the expectation and distribution of $x+X$, but we use the same symbol $X$ for the resulting process, cf. \cite[Chapter~8]{MR1739520}.
We write $p_t(x, A)=p_t(A-x)=\PP^x(X_t\in A)$, so that
$$
\E^x \int_0^\infty f(t,X_t)dt=\int_0^\infty \int_\Rd f(t,y)p_t(x,dy)
$$
for (Borel) functions $f\ge 0$ and $x\in \Rd$.
We consider the time of the first exit of $X$ from $D$,
$$\tau_D=\inf\{t>0: \, X_t\notin D\}.$$
The Dirichlet kernel $p_t^D(x,dy)$ is defined by
$$
\int_\Rd f(y)p_t^D(x,dy)=\E^x [f(X_t);\tau_D>t],
$$
and we have
$$
\E^x \int_0^{\tau_D} f(t,X_t)dt=\int_0^\infty \int_\Rd f(t,y)p_t^D(x,dy).
$$
We now consider function $F(u,y,w)=\1_{I}(u)\1_A(y) \1_B(w)$, where $I$ is a bounded interval, and $A\subset D$, $B\subset (\overline{D})^c$ are Borel sets in $\Rd$. 
We let
\begin{equation*}
M(t)= \sum\limits_{\substack{0<u \le t \\ |\dX_u| \neq 0}}F(u, X_{u-}, X_{u}) - \int_{0}^{t} \int_{\Rd} F(u,X_{u},
X_{u}+z)\nu(dz)dv.
\end{equation*}
We note that \
\begin{equation}\label{eq:ui}
\E \int_{0}^{t} \int_{\Rd} F(u,X_{u},X_{u}+z)\nu(dz)dv\le |I|\nu(\{|z|>{\rm dist}(A,B)\})<\infty,
\end{equation} 
so by Lemma~\ref{lem2}, $\E M(t)=0$.  
Let $0\le s\le t$. By considering the L\'evy process $u\mapsto X_{s+u}-X_s$, independent of $X_r$, $0 \le r \le s$, we calculate the conditional expectation
$$\E[\sum\limits_{\substack{s<u \le t \\ |\dX_u| \neq 0}}F(u, X_{u-}, X_{u}) - \int_{s}^{t} \int_{\Rd} F(u,X_{u},
X_{u}+z)\nu(dz)dv\big| X_r, 0\le r\le s]=0,$$ cf. \eqref{eq:dec}.
Then we see that $M$ is a uniformly integrable martingale. By stopping at $\tau_D$,   
we obtain, 
\begin{align}\label{eq:IW}
\PP^x[\tau_D\in I,\; X_{\tau_D-}\in A,\; X_{\tau_D}\in B]=
\int_I \int_{B-y} \int_A p_u^D(x,dy)\nu(dz)du.
\end{align}
This defines the joint distribution of $(\tau_D,X_{\tau_D-},X_{\tau_D})$ restricted to the event $\{X_{\tau_D-}\in D\}$ and calculated under $\PP^x$.

As another application we use the double mixed L\'evy system to prove the following classical result 
\cite[II (3.9)]{MR1011252}.

\begin{lemma}\label{martingaleLevy} Let $X$ be a L\'evy process in $\Rd$ with L\'evy measure $\nu$.
Let the function $F: \R \times \R^d \times \R^d \rightarrow \Rb$ satisfy
\begin{equation}\label{eq4.1}
\E \int\limits_0^{\infty} \int\limits_{\R^d} F^2(v,X_{v},X_{v}+z) \nu(dz) dv < \infty.
\end{equation}
For every $t \in [0,\infty)$ the following limit exists in $L^2$
$$M_{t} = \lim\limits_{\e \rightarrow 0} \bigg( \sum_{\substack{0< v \leq t \\ |\dX| \geq \e}} F(v,X_{v-}, X_{v})- \int_0^t \int_{|z| \geq \e} F(v,X_{v},X_{v}+z)\nu(dz)dv \bigg),$$
$t\mapsto M_{t}$ is a martingale with respect to $(\F_t)$, $\E M_t=0$ and $$\E M_t^2=\E\int_0^{t} \int_{\R^d} F^2(v,X_{v},X_{v}+z) \nu(dz) dv.$$ Furthermore, the square bracket of $M$ is
\begin{equation}\label{eq:wsb}
[M]_{t}= \sum\limits_{\substack{0< v \leq t\\ \dX_v\neq 0}}F^2(v,X_{v-},X_{v}),
\end{equation}
and the predictable quadratic variation of $M$ is
\begin{equation}\label{eq:wqv}
\langle M\rangle_t=\int_0^t \int_{|z| \geq \e} F(v,X_{v},X_{v}+z)^2\nu(dz)dv.
\end{equation}
\end{lemma}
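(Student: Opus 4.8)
\medskip

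The plan is to construct $M_t$ as an $L^2$-limit of the approximating martingales obtained by truncating the jumps, and then to identify its expectation, its second moment, its square bracket, and its predictable quadratic variation. First I would fix $\e>0$ and set $M_t^\e = \sum_{0<v\le t,\,|\dX_v|\ge \e} F(v,X_{v-},X_v) - \int_0^t\int_{|z|\ge\e} F(v,X_v,X_v+z)\,\nu(dz)dv$. As in the construction of $M$ preceding \eqref{eq:IW}, each $M^\e$ is a martingale with $\E M_t^\e = 0$: this follows from Lemma~\ref{lem2} applied to $F\,\1_{\{|z|\ge\e\}}\,\1_{(0,t]}(u)$, whose integrability is guaranteed once \eqref{eq4.1} holds (the compensator has finite expectation because $\nu$ restricted to $\{|z|\ge\e\}$ is finite and $F$ is square-integrable against the intensity). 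The key point is that $M^\e$ is a compensated sum, hence a purely discontinuous martingale with mean zero.

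\medskip

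The heart of the argument is the $L^2$ estimate. For $0<\e'<\e$ the difference $M_t^{\e'}-M_t^\e$ is again a compensated sum, now over jumps with $\e'\le|\dX_v|<\e$, so $\E(M_t^{\e'}-M_t^\e)^2$ should equal $\E\int_0^t\int_{\e'\le|z|<\e} F^2(v,X_v,X_v+z)\,\nu(dz)dv$. This is precisely the kind of second-moment identity produced by the double mixed L\'evy system: writing the square of the compensated integral as a double integral and expanding, the ``off-diagonal'' terms (distinct jump times) cancel against the product of compensators, while the ``diagonal'' term (a single jump time, counted once) survives and yields the integral of $F^2$ against $\sigma(du\,dz)=du\,\nu(dz)$. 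Concretely, one applies the $n=2$ case of Theorem~\ref{E} (equivalently, Lemma~\ref{lem:22} and Lemma~\ref{Lem:mixed} in the present space-time setting) to the four products arising from expanding $(\text{sum}-\text{compensator})^2$; the cross terms reproduce $\int_0^t\int F^2\,\nu(dz)dv$ and the remaining contributions cancel in pairs. By \eqref{eq4.1} this tail integral tends to $0$ as $\e,\e'\to 0$, so $\{M_t^\e\}$ is Cauchy in $L^2$ and the limit $M_t$ exists. The same computation with $\e'\to 0$ and $\e$ fixed, or directly with the limiting objects, gives $\E M_t^2 = \E\int_0^t\int_{\Rd} F^2(v,X_v,X_v+z)\,\nu(dz)dv$, and $\E M_t = 0$ passes to the limit by the $L^2$ (hence $L^1$) convergence. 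The martingale property of $t\mapsto M_t$ is inherited from the $M^\e$ in the $L^2$ limit, using that $L^2$ convergence preserves conditional expectations.

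\medskip

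For the square bracket \eqref{eq:wsb}, I would argue that $M$ is a purely discontinuous martingale whose jumps are exactly $\Delta M_v = F(v,X_{v-},X_v)$ at the jump times of $X$ (the compensator being continuous in $t$ contributes nothing to $[M]$), so $[M]_t = \sum_{v\le t}(\Delta M_v)^2 = \sum_{0<v\le t,\,\dX_v\ne 0} F^2(v,X_{v-},X_v)$; its finiteness and the identity $\E[M]_t = \E M_t^2$ are consistent with the second-moment formula via Lemma~\ref{lem2} applied to $F^2$. The predictable quadratic variation \eqref{eq:wqv} is then the compensator of $[M]$, namely $\langle M\rangle_t = \int_0^t\int_{\Rd} F^2(v,X_v,X_v+z)\,\nu(dz)dv$, which by Lemma~\ref{lem2} (applied to $F^2\,\1_{(0,t]}$) has the same expectation as $[M]_t$ and is predictable and continuous; that $M_t^2-\langle M\rangle_t$ is a martingale follows from the increment computation already used for the $L^2$ bound.

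\medskip

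The main obstacle I anticipate is the careful bookkeeping in the second-moment expansion: one must expand $(M_t^\e)^2$ into the sum--sum, sum--integral, integral--sum, and integral--integral pieces and apply the appropriate cases of Theorem~\ref{E} (or Lemmas~\ref{lem:22} and~\ref{Lem:mixed}) to each, verifying that all off-diagonal contributions cancel and only the diagonal $\int F^2\,\nu\,du$ term remains. A secondary subtlety is the passage to the limit: showing that $\langle M\rangle$ and $[M]$ for the approximations converge to the stated expressions, and that the limiting $M$ retains the martingale and purely-discontinuous structure, which requires the $L^2$ control furnished by \eqref{eq4.1} together with the isometry identity established above.
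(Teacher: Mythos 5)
Your proposal is correct and takes essentially the same route as the paper, which itself only sketches this argument: truncated compensated sums, second-moment and quadratic-variation identities obtained from the double mixed L\'evy systems (Corollary~\ref{lem4} and Corollary~\ref{lem6}, i.e.\ the $n=2$ case of Theorem~\ref{E}), and the single L\'evy system for $\E [M]_t = \E \langle M\rangle_t$, with full details deferred to \cite{dissert}. Note only that \eqref{eq:wqv} as printed contains a stray $\e$ in the domain of integration; your version with $\int_{\Rd}$ is the intended statement.
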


\noindent Recall that $[M]$ is defined as the unique adapted right-continuous non-decreasing process with jumps $\Delta[M]_t=|\Delta M_t|^2$, and such that $t\mapsto |M|^2_t - [M]_t$ is a (continuous) martingale starting at $0$ (\cite[VII.42]{MR745449}). We verify the martingale property of $|M|^2_t - [M]_t$ by using Corollary \ref{lem4} and Corollary \ref{lem6}. Notice that $\E [M]_t = \E \langle M \rangle_t$ by the property of a single L\'evy system. More details and applications can be found in \cite{dissert}. In particular, the square bracket $[M]$ is used in \cite{pre06142552} to estimate the $L^p$ norms of Fourier multipliers defined in terms of L\'evy processes. We refer the reader to \cite[VII-VIII]{MR745449} and \cite[II]{MR1011252} for further details and reading.

As the third application 
we will calculate moments of the L\'evy integral.
Let $X_t=(\eta_t,\xi_t)$, where $t\ge 0$, be a  L\'evy process in $\R^2$.
To simplify the discussion we further assume that  $\eta$ and $\xi$ are (possibly dependent) subordinators with no drift  \cite{MR1739520,MR2598208}. Let $\nu$ be the L\'evy measure of $X$.
Of course, $\nu$ is concentrated on $\R_{++}^2:=(0,\infty)\times (0,\infty)$.
Let $\phi$ be the Laplace exponent of $\eta$:
$$\E\left[e^{-x\eta_t}\right]=e^{-t\phi(x)},\quad x\geq 0.$$
  The following  expression is called the L\'evy integral,
$$Z=\int_0^\infty e^{-\eta_t-}d\xi_t=\sum_{\Delta X_t\neq 0}e^{-\eta_{t-}}\Delta \xi_t.
$$
L\'evy integrals  represent stationary distributions of generalized Ornstein-Uhlenbeck process (see \cite{MR2165340} for details, applications and references). By Lemma~\ref{lem2},  
$$\E\left[\xi_1\right]=\E \sum\limits_{\substack{ 0<t\le 1\\\Delta X_t\neq 0}}\Delta \xi_t=\int_{\R_{++}^2} y\,d\nu(x,y).$$
We can use the multiple L\'evy systems to calculate the moments of $Z$. 
The first three moments of $Z$ take on the following form
\begin{align*}
\E [Z]&=\frac{\int y\,d\nu(x,y)}{\phi(1)},\\
\E [Z^2]&=\frac{2\int e^{-x}y\,d\nu(x,y)\int y\,d\nu(x,y)}{\phi(1)\phi(2)}+\frac{\int y^2\,d\nu(x,y)}{\phi(2)},\\
\E [Z^3]&=\frac{6\int y\,d\nu(x,y)\int e^{-x}y\,d\nu(x,y)\int e^{-2x} y\,d\nu(x,y)}{\phi(1)\phi(2)\phi(3)}+\frac{\int y^3\,d\nu(x,y)}{\phi(3)}\\
&\ \ \ +
\frac{3\int y^2\,d\nu(x,y)\int e^{-2x}y\,d\nu(x,y)}{\phi(2)\phi(3)}+\frac{3\int y\,d\nu(x,y)\int e^{-x}y^2\,d\nu(x,y)}{\phi(1)\phi(3)}.
\end{align*}
Indeed, by Lemma~\ref{lem2},  
\begin{align*}
\E [Z]&=\E\left[\sum_{\Delta X_t\neq 0}e^{-\eta_{t-}}\Delta \xi_t\right]=\E\left[\int_0^\infty \int
e^{-\eta_{t}}y\,d\nu(x,y)dt\right]
=\frac{\int y\,d\nu(x,y)}{\phi(1)}.
\end{align*}
For the higher moments we use linearization, as in Section~\ref{sec:a}, e.g., we obtain
\begin{align*}
\E [Z^2]&=\E\left[\left(\sum_{\Delta X_t\neq 0}e^{-\eta_{t-}}\Delta \xi_t\right)^2\right]
=\E\left[
\left(\sum_{\Delta X_s\neq 0}e^{-\eta_{s-}}\Delta \xi_s\right)
\left(\sum_{\Delta X_t\neq 0}e^{-\eta_{t-}}\Delta \xi_t\right)
\right]
\\
&=\E\left[2\sum_{\substack{s<t\\\Delta X_s,\Delta X_t\neq 0}}e^{-\eta_{s-}-\eta_{t-}}\Delta \xi_s\Delta \xi_t\right]+\E\left[\sum_{\Delta X_t\neq 0}\left(e^{-\eta_{t-}}\Delta \xi_t\right)^2\right]=2\rm{I}+\rm{II},
\end{align*}
where, by Corollary~\ref{lem4},
\begin{align*}
\rm{I}
&=\E\left[\int_0^\infty \int_s^\infty\int \int e^{-\eta_s}y_1 e^{-\eta_{t}-x_1}y_2 \,d\nu(x_1,y_1)\,d\nu(x_2,y_2)dt\,ds\right]\\
&=\int \int e^{-x_1}y_1y_2\,d\nu(x_1,y_1) \,d\nu(x_2,y_2)\,\E\left[\int_0^\infty \int_s^\infty \,e^{-(\eta_{t}-\eta_{s})-2\eta_{s}}dt\,ds\right]\\
&=\int e^{-x}y\,d\nu(x,y)\int y\,d\nu(x,y)\,\int_0^\infty \int_s^\infty \,\E\left[e^{-\eta_{t-s}}\right]\E\left[e^{-2\eta_{s}}\right]dt\,ds\\
&=\frac{\int e^{-x}y\,d\nu(x,y)\int y\,d\nu(x,y)}{\phi(1)\phi(2)},
\end{align*}
and
\begin{align*}
\rm{II}
&=\E\left[\int_0^\infty\int y^2 e^{-2\eta_{t}}\,d\nu(x,y)dt \right]=\frac{\int y^2\,d\nu(x,y)}{\phi(2)}.
\end{align*}
The third and the higher moments are obtained analogously. We note that  \cite[Theorem 3.1]{MR2858217}  gives the first and the second moments of $Z$, but not the higher moments, which are cumbersome to obtain by the methods of \cite{MR2858217} (private communication).  

By our methods one also compute moments of anticipating integrals such as
$$Y:=\int_0^\infty e^{-\eta_t}d\xi_t=\sum_{\Delta X_t\neq 0}e^{-\eta_{t-}-\Delta \eta_t}\Delta \xi_t.$$
Here, similar calculations as for $Z$ yield
\begin{align*}
&\E [Y]=\frac{\int e^{-x}y\,d\nu(x,y)}{\phi(1)},
\end{align*}
and higher moments of $Y$ can be obtained analogously.
Notice the difference between the formulas for
the expectations of $Z$ and $Y$.

\section{Appendix}\label{sec:MP}
The following Mecke-Palm identity holds for $1$-processes $f(x;\w)\ge 0$ \cite{MR2531026},
\begin{equation}\label{mSi}
\E  \int_\X f(x;\w)\ \w (dx)
= \int_{\X}\E \ f(x;\w \cup \{x\})\ \sigma( dx).
\end{equation}
For the reader's convenience we give a direct proof of 
\eqref{mSi} in the setting of Section~\ref{sec:iMPf}.
We first consider $\sigma(\X)<\infty$ and nonnegative process $f(x;\w)=f(x;\w\cap \X)$, i.e. depending only on $\X$. 
If $\w = \{y_1, \ldots, y_n\}$, a set with $n$ elements, then 
$$\int_\X f(x;\w) \w(dx)=\sum_{i=1}^n f_{(n)} (y_i; y_1, \ldots, y_n)
.$$
The above quantity 
is invariant upon permutations of 
$y_1, \ldots, y_n$, in fact it is the $n$-th coefficient of the 
random variable $\int_\X f(x;\w) \w(dx)$.
By \eqref{emSie}, the left-hand side of \eqref{mSi} equals
\begin{equation}\label{emSia}
e^{-\sigma(\X)}\sum_{n=1}^{\infty}\frac{1}{n!} \sum_{i=1}^n  \int_{\X^n}f_{(n)}(y_i;y_1, \ldots, y_n) \sigma(dy_1)\cdots \sigma(dy_n).
\end{equation}
If $\w = \{y_1, \ldots, y_n\}$, a set with $n$ elements, and $x\not \in \omega$, then 
\begin{align*}
f(x;\w \cup \{x\})&=f_{n+1} (x;x,y_1,\ldots, y_n)=\ldots=f_{n+1} (x;y_1,\ldots, y_n,x)\\
&=\frac1{n+1}\sum_{i=1}^{n+1}f_{n+1} (x;y_1,\ldots,y_{i-1},x,y_{i},\ldots, y_n).
\end{align*}
Since $\sigma$ is non-atomic, we have $\PP(x\in \w)=0$ for every $x\in \X$, cf. \eqref{emSie}. 
Therefore, by  \eqref{emSie},
the right-hand side of \eqref{mSi} equals
\begin{align*}
&\E \int_{\X} u(x;\w \cup \{x\})\sigma( dx)\\
&= e^{-\sigma(X)}\sum_{n=0}^{\infty}\frac{1}{n!}  
\int_\X \int_{\X^n}\frac1{n+1}\sum_{i=1}^{n+1}f_{n+1}(x;y_1, \ldots, y_n,x) \sigma(dy_1)\cdots \sigma(dy_n) \sigma(dx).
\end{align*}
This verifies \eqref{mSi} when $\sigma(\X)<\infty$, e.g., if $\X\subset \XX$ is compact; we note in passing that \eqref{emSia} is an explicit representation of either side of \eqref{mSi}.

We next let $\XX=\bigcup_m \X_m$ be a countable decomposition of $\XX$ 
into disjoint Borel sets with $\sigma(\X_m)<\infty$. For arbitrary process $f(x;\w)\ge 0$ we have
\begin{equation}\label{mSid}
\int_\XX f(x;\w) \w (dx)
= \sum_m \int_{\X_m} f(x;\w) \w (dx).
\end{equation}
For fixed $m$, we write $\w_* =\w\cap \X_m$, $\w^* =\w\setminus \X_m$, and denote
by $\E_*$ and $\E^*$ the expectation $\E$ when restricted to random variables  depending only on $\X_m$ and $\XX\setminus \X_m$, respectively.
By \eqref{eq:ind} and by \eqref{mSi} for $\X_m$, 
\begin{align*}
&\E \int_{\X_m} f(x;\w) \w (dx)
=\E^*\E_* \int_{\X_m} f(x;\w_* \cup \w^* ) \w_*  (dx)\\
&=\E^* \int_{\X_m} \E_* f(x;\w_* \cup \{x\}\cup \w^* ) \sigma(dx)
= \int_{\X_m}\E f(x;\w \cup \{x\}) \sigma(dx).
\end{align*}
This yields \eqref{mSi} in the general case, cf. \eqref{mSid}.

Needless to say, (\ref{mSi}) also holds for signed processes $f$ under the assumption of absolute integrability, because we can decompose both sides of (\ref{mSi}) according to $f=f_+-f_-$, where $f_+=\max \{f,0\}$ and $f_-=\max\{-f,0\}$.


\end{document}